\newtheorem{theorem}{Theorem}
\newtheorem{remark}{Remark}
\newtheorem{assumption}{Assumption}
\newtheorem{lemma}{Lemma}
\newcommand{\blind}{1}
\begin{document}

\def\spacingset#1{\renewcommand{\baselinestretch}%
{#1}\small\normalsize} \spacingset{1}


\if1\blind
{
  \title{\bf Latent Model Extreme Value Index Estimation}
  \author{Joni Virta\thanks{
    Joni Virta gratefully acknowledges the financial support from the Academy of Finland (Grant 321883). Niko Lietz\'{e}n gratefully acknowledges the financial support from the Emil Aaltonen Foundation (Grant
    190135~N). All authors acknowledge the computational resources provided by the Aalto Science-IT project.}\hspace{.2cm}\\
	University of Turku\\
    Aalto University School of Science\\
    and \\
    Niko Lietz\'en \\
    Aalto University School of Science\\
	and \\
Lauri Viitasaari \\
Aalto University School of Business \\
	and \\
	Pauliina Ilmonen \\
	Aalto University School of Science}

  \maketitle
} \fi

\if0\blind
{
  \bigskip
  \bigskip
  \bigskip
  \begin{center}
    {\LARGE\bf Latent Model Extreme Value Index Estimation}
\end{center}
  \medskip
} \fi

\bigskip
\begin{abstract}
We propose a novel strategy for multivariate extreme value index estimation. In applications such as finance, volatility and risk present in the components of a multivariate time series are often driven by the same underlying factors, such as the subprime crisis in the US. To estimate the latent risk, we apply a two-stage procedure. First, a set of independent latent series is estimated using a method of latent variable analysis. Then, univariate risk measures are estimated individually for the latent series to assess their contribution to the overall risk. As our main theoretical contribution, we derive conditions under which the effect of the first step to the asymptotic behavior of the risk estimators is negligible. Simulations demonstrate the theory under both i.i.d. and dependent data, and an application into financial data illustrates the usefulness of the method in extracting joint sources of risk in practice.
\end{abstract}

\noindent%
{\it Keywords:}  Blind source separation, Hill estimator, independent component analysis, moment estimator, tail index
\vfill

\newpage

\section{Introduction}\label{sec:introduction}

Let $ \textbf{x}_1, \ldots , \textbf{x}_n $ be a sample of $ p $-variate random vectors with possibly dependent distributions. For each observation, we assume the instantaneous latent variable model,
\begin{align}\label{eq:latent_model}
\textbf{x}_i = f(\textbf{z}_i), \quad i = 1, \ldots , n,
\end{align}
where the latent $ p $-variate random vectors  $ \textbf{z}_1, \ldots , \textbf{z}_n $ are assumed to have independent components in the sense that the $ k $th component $ z_{ik} $ of $ \textbf{z}_i $ is independent of the $ l $th component $ z_{jl} $ of $ \textbf{z}_l $, for all $ i, j $ and $ k \neq l $. Furthermore, we assume $ f: \mathbb{R}^p \rightarrow \mathbb{R}^p $ is a deterministic function that is smooth and bijective. Note that while no explicit noise term is present in \eqref{eq:latent_model}, the general formulation still captures noisy latent models as well, as one or several of the $ p $ latent components can represent noise which is then combined with the other components (signals) by the function $ f $ in a desired manner (additively, multiplicatively etc.).

The model \eqref{eq:latent_model} can be considered as a very general form of \textit{independent component analysis} and has applications in numerous fields such as in telecommunications, psychometrics, economics and finance \citep{comon2010handbook, hyvarinen2000independent}. The model provides a powerful alternative to standard multivariate modelling schemes as, after having estimated the latent vectors, the independence of their components implies that all subsequent modeling can be done univariately. This structural simplification leads to both smaller number of parameters to estimate and simplified interpretations for the components as no interactions between the series need to be acknowledged.


\textcolor{black}{In this paper we focus on estimating the tail behaviour of the latent variables in the model~\eqref{eq:latent_model}, evaluated through the \textit{extreme value indices} of the corresponding distributions \citep{de2007extreme}. This is a natural goal to pursue in many financial and signal processing applications as the heaviness of the tails of a distribution is an indicator of an unstable and risky signal. For example, the independent component model has been applied in cashflow analysis and prediction of financial time series data \citep{kiviluoto1998independent, lu2009financial, yang2005fast}, and in this context assessing the tail behaviour of the obtained independent components could help identify common sources of financial risk. Similarly, the evaluation of the extreme behaviour of latent components could help identify the sources of abnormalities in applications such as biomedical imaging \citep{roberts2000extreme} or maritime vessel track analysis \citep{smith2012online}.} 


\textcolor{black}{This objective can be reached in two steps. First, we estimate a mapping $ \hat{f}^{-1} $ such that $  \hat{f}^{-1} (\textbf{x}) $ equals the latent components up to order and scales (in latent component analysis, the order and scales of the latent components are usually neither of interest nor identifiable, see, e.g., \cite{tong1991indeterminacy}). Especially under linear $ f $, numerous techniques for obtaining consistent estimators under various types of data exist, \textcolor{black}{see Section~\ref{sec:two_models} for examples}. Second, after having obtained the sample estimates $ \hat{f}^{-1}(\textbf{x}_1), \ldots , \hat{f}^{-1}(\textbf{x}_n)$ of the latent vectors, we use one of the several univariate extreme value index estimators presented in the literature \citep{de2007extreme} to assess the extreme behavior of the individual, now independent, components.} 

\textcolor{black}{Note that, in contrast to the above, the standard approach in multivariate extreme value theory is to assess the extreme behaviour component-wise for the observed multivariate signal itself \citep{de2007extreme}. Approaches which in some way acknowledge the multivariate structure of the data have been proposed only recently, and they include considering convex combinations of the component-wise estimators \citep{dematteo2016tail, kim2017estimation}, extreme risk region estimation \citep{cai2011estimation}, and estimating the extreme value index of the generating variate of an underlying elliptical model \citep{dominicy2017multivariate, heikkila2019multivariate}. However, these methods either involve complicated estimation or require strict distributional assumptions, making them less than ideal in practice. In comparison, our proposed two-step procedure is straightforward to apply and takes the multivariate form of the data into account in a natural way. Moreover, the associated latent variable model is flexible, allowing different tail behaviors for the underlying independent components. The only structural assumption we make is that the observed variables are generated by a set of independent factors.}

\subsection{Scope and structure of the paper}

\textcolor{black}{Of the two steps of our proposed method, we are primarily interested in the latter.} That is, we focus on assessing the extreme behavior of the individual components in the independent component model \eqref{eq:latent_model}.  Throughout the article, we assume that there exists an estimator $ \hat{f}^{-1} $ with the asymptotic linearization
\begin{align}\label{eq:linearization}
\hat{\textbf{z}}_i: = \hat{f}^{-1} (\textbf{x}_i) = \textbf{z}_i + \hat{\textbf{H}} \textbf{z}_i + \hat{\textbf{r}},
\end{align}
where the $ p \times p $ -matrix $ \hat{\textbf{H}} = \mathcal{O}_p(c_n^{-1}) $ and the $ p $-vector $ \hat{\textbf{r}} = \mathcal{O}_p(c_n^{-1})$ for some rate $ c_n $. Here $\mathcal{O}_p(c_n^{-1})$ denotes the element-wise ``convergence rate'' in probability. For a precise definition, see Section \ref{sec:general_framework}. The form \eqref{eq:linearization} is very general and encompasses many popular estimators $ \hat{f}^{-1} $ in the independent component analysis and blind source separation literature, \textcolor{black}{see Section~\ref{sec:two_models} for examples.} 
\textcolor{black}{Assuming for now that an estimator $ \hat{f}^{-1} $ exists in the sense of \eqref{eq:linearization}, our main objective is to estimate the extreme value indices of the components of the latent variables using $ \hat{\textbf{z}}_i $ as a proxy for $ \textbf{z}_i $, and to show that this approximation incurs no loss in asymptotic efficiency under a suitable set of assumptions.}

\textcolor{black}{A further complicating factor is that latent variable models such as \eqref{eq:latent_model} are well-known for not having fixed signs or scales for the latent components. That is, the vector $ \textbf{z}_i $ on the right-hand side of \eqref{eq:linearization} actually corresponds in many models to the true latent vectors only up to the signs and scales of its components.} In the standard usage of latent variable modelling this is most often acceptable, as our interest lies commonly not in the signs, but in the shapes of the distributions of the latent variables. \textcolor{black}{Similarly, in the present context, the scale of the components is irrelevant as most commonly applied extreme value index estimators are scale-invariant.} However, as risk is estimated from the tails of the components, knowing in which of the tails we are in is for our purposes of paramount importance, and we need a way of identifying the correct tail. \textcolor{black}{A simple, but restrictive, solution would be to require that all the latent components have symmetric distributions.} Instead, we choose to assess the extreme behaviour of, not the latent components, but their absolute values. This rids us of the sign indeterminacy by ``stacking'' the two tails on top of each other. Since the absolute value inherits its tail behaviour from the heavier of the two tails, this approach has the interpretation of us always looking at the heavier of the two tails. Moreover, as heavier tails correspond to larger risk, the use of absolute values can be seen as a conservative approach to tail behaviour estimation.

The rest of the paper is organized as follows:  Preliminaries on extreme value theory along with the popular extreme value index estimators, the Hill estimator and the moment estimator, are reviewed in Section \ref{sec:extreme_value}. These extreme value index estimators are known to be consistent and asymptotically normal under mild technical conditions. In Section \ref{sec:general_framework}, we derive sufficient conditions ensuring that the asymptotic properties of the extreme value estimators are preserved when estimated using the proxy sample $ \hat{f}^{-1} (\textbf{x}_i) $. 
In Section \ref{sec:two_models}, we consider two example cases of the general framework and discuss the particular assumptions needed to achieve the limiting results for the corresponding proxy samples. In Section \ref{sec:examples}, we present a large simulation study and a real data application is considered in Section \ref{sec:real_data}. 
All the proofs are postponed to the supplementary appendix, along with a supplementary simulation study and additional details concerning the real data example.

\section{Preliminaries on extreme value theory}\label{sec:extreme_value}

In the following we provide a brief introduction to the topics in univariate extreme value theory that are most relevant to our objectives. See \cite{de2007extreme} and the references therein for more information.

Consider an i.i.d. random sample $\textbf{y} = (y_1, \ldots , y_n)$ from a univariate distribution $F$ and the sample maximum $M_n = \max_{1 \leq i \leq n} y_i$. If there exists sequences of constants $a_n > 0$ and $b_n$ such that $a_n M_n+ b_n$ has a limiting distribution $G$, we say that $G$ is the \emph{extreme value distribution} of $F$. One of the fundamental results in extreme value theory is the Fisher-Tippett-Gnedenko theorem \textcolor{black}{which identifies the class of distributions~$G$.}

\begin{theorem}[Fisher-Tippett-Gnedenko]\label{thm:ftg}
	The class of extreme value distributions is $G_\gamma \left( a x + b \right)$ with $a > 0$ and $b\in \mathbb{R}$, where
	$$
	G_\gamma\left(x \right)= \exp\left(-\left(1+ \gamma x \right)^{-1/\gamma} \right), \quad 1+ \gamma x > 0,
	$$
	with $\gamma \in \mathbb{R}$ and where for $\gamma = 0$ the right-hand side is interpreted as $\exp\left(-e^{-x} \right)$.
\end{theorem}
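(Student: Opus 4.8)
The plan is to prove the theorem in two stages. First, I would show that any limiting law $G$ arising from normalized maxima must be \emph{max-stable}, i.e.\ satisfy a self-reproduction identity under taking powers. Second, I would classify all max-stable distribution functions by reducing the identity to a pair of Cauchy-type functional equations and solving them. The converse inclusion --- that every $G_\gamma(ax+b)$ really is an extreme value distribution --- is then a short direct verification (each $G_\gamma$ is the extreme value distribution of, e.g., itself).

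\textbf{Step 1: reduction to max-stability.} Suppose $F^n(a_n x + b_n) \to G(x)$ at all continuity points of the non-degenerate $G$. Fix a positive integer $k$ and split a sample of size $nk$ into $k$ independent blocks of size $n$. On one hand this gives $F^{nk}(a_{nk} x + b_{nk}) \to G(x)$; on the other hand $F^{nk}(a_n x + b_n) = \big(F^n(a_n x + b_n)\big)^k \to G(x)^k$. Thus $F^{nk}$, evaluated along two affine sequences, converges to the two non-degenerate limits $G$ and $G^k$. The convergence-of-types theorem (Khinchin's lemma) then produces constants $\alpha_k > 0$, $\beta_k \in \mathbb{R}$ with $a_n / a_{nk} \to \alpha_k$, $(b_n - b_{nk})/a_{nk} \to \beta_k$, and
$$ G(x)^k = G(\alpha_k x + \beta_k), \qquad k = 1, 2, \ldots, $$
which is exactly the max-stability relation that every extreme value distribution must obey.

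\textbf{Step 2: solving the functional equation.} Monotonicity of $G$ lets me interpolate the integer relation to all real exponents: there are (necessarily continuous) maps $\alpha : (0,\infty) \to (0,\infty)$ and $\beta : (0,\infty) \to \mathbb{R}$ with $G(x)^s = G(\alpha(s) x + \beta(s))$ for every $s > 0$. Iterating this identity in two ways and invoking uniqueness of the affine normalization of a non-degenerate law yields the coupled system $\alpha(st) = \alpha(s)\alpha(t)$ and $\beta(st) = \alpha(t)\beta(s) + \beta(t)$. Continuity turns the first equation into $\alpha(s) = s^{-\gamma}$ for some $\gamma \in \mathbb{R}$; substituting this into the second and using its symmetry in $s$ and $t$ gives $\beta(s) = c\,(1 - s^{-\gamma})$ when $\gamma \neq 0$ and $\beta(s) = c\log s$ when $\gamma = 0$. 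Writing $G = \exp(-\psi)$, the relation $s\,\psi(x) = \psi(\alpha(s) x + \beta(s))$ becomes, after the shift $x \mapsto x - c$ in the case $\gamma \neq 0$, the homogeneity relation $\psi(s^{-\gamma} u) = s\,\psi(u)$ (respectively $\psi(u - \gamma^{-1}\log s) = s\,\psi(u)$ when $\gamma = 0$), whose monotone solutions are exactly $\psi(u) = \mathrm{const}\cdot u^{-1/\gamma}$ (respectively $\mathrm{const}\cdot e^{-u}$). Absorbing the remaining constants into the permitted affine change $x \mapsto ax + b$ reproduces $G_\gamma(ax+b)$, with $\gamma > 0$, $\gamma < 0$ and $\gamma = 0$ accounting for the Fr\'echet, reversed-Weibull and Gumbel types respectively.

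\textbf{Anticipated main obstacle.} I expect the delicate points to be, first, the bookkeeping in the convergence-of-types step --- one must confirm that both limit laws are non-degenerate and keep careful track of the normalizing sequences --- and, second, the regularity input needed to solve the functional equations: one has to use monotonicity of $G$ (hence of $\alpha$ and $\beta$) to discard the pathological non-measurable solutions of $\alpha(st) = \alpha(s)\alpha(t)$. Once those two issues are handled, what remains is a routine, if somewhat lengthy, case analysis over the sign of $\gamma$.
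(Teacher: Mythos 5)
The paper does not prove this statement at all: Theorem~\ref{thm:ftg} is the classical Fisher--Tippett--Gnedenko theorem, quoted as background with a reference to \cite{de2007extreme}, and the appendix only proves the paper's own results (Lemmas~\ref{lem:interpretation_g}--\ref{lem:max_taylor} and Theorems~\ref{prop:consistency_and_limiting}--\ref{prop:consistency_and_limiting2}). So there is no internal proof to compare against; what you have written is the standard textbook proof (max-stability via the convergence-of-types theorem, then classification of max-stable laws through the Cauchy-type system $\alpha(st)=\alpha(s)\alpha(t)$, $\beta(st)=\alpha(t)\beta(s)+\beta(t)$), as in Resnick or Embrechts--Kl\"uppelberg--Mikosch, whereas the reference the paper leans on (de Haan and Ferreira) proves the result by a different route, working with the tail quantile function $U=(1/(1-F))^{\leftarrow}$ and solving a functional equation for $\lim_{t\to\infty}(U(tx)-U(t))/a(t)$. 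Your outline is essentially correct: the derivation of $G^k(x)=G(\alpha_k x+\beta_k)$, the identification $\alpha(s)=s^{-\gamma}$, $\beta(s)=c(1-s^{-\gamma})$ (resp.\ $c\log s$), and the reduction to the homogeneity relation for $\psi=-\log G$ are exactly the classical steps, and the converse inclusion is indeed immediate since each $G_\gamma$ is max-stable.

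Two points deserve tightening. First, the passage from integer $k$ to all real exponents $s>0$ is not really an ``interpolation by monotonicity'': the clean argument re-runs the convergence-of-types step along the subsequence $\lfloor ns\rfloor$, comparing $F^{\lfloor ns\rfloor}(a_{\lfloor ns\rfloor}x+b_{\lfloor ns\rfloor})\to G$ with $F^{\lfloor ns\rfloor}(a_n x+b_n)\to G^{s}$, which directly yields $\alpha(s),\beta(s)$ for all $s>0$ together with the measurability/continuity needed to rule out pathological Hamel solutions (this also answers the regularity worry you raise). Second, your display for the Gumbel case contains a slip: when $\gamma=0$ the relation should read $\psi(u+c\log s)=s\,\psi(u)$ with $c$ the constant from $\beta(s)=c\log s$; writing $\gamma^{-1}\log s$ is meaningless there. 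Finally, the ``routine case analysis'' you defer does require some genuine bookkeeping of the endpoints of the support (where $\psi$ is $0$ or $+\infty$) to land exactly on the three types, but none of this affects the soundness of the strategy.
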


According to Theorem \ref{thm:ftg}, the family of possible extreme value distributions has a remarkably simple form, parametrized by a single real number $\gamma$. If $G_\gamma$ is the extreme value distribution of $F$, the distribution $F$ is said to be in the \emph{domain of attraction} of $G_\gamma$, and we write $ F \in G_\gamma $. The parameter $\gamma$ is said to be the \emph{extreme value index} of $F$. The \textcolor{black}{parameter} $ \gamma $ measures the thickness of the (right) tail of $ F $ and knowing its value leads to a complete characterization of the asymptotic tail behavior of $ F $, allowing extrapolating probabilities beyond the observed dataset. Thus $ \gamma $ is a key ingredient in risk assessment.

It is widely accepted that distributions are divided into heavy and light tailed ones based on the sign of $\gamma$. More precisely, for $\gamma>0$, the distributions $F\in G_\gamma$ are called \emph{heavy tailed} and belonging to the domain of attraction of the \emph{Frechet} distribution. Similarly, if $\gamma<0$ and $F\in G_\gamma$, then we say that $F$ is \emph{light tailed} and belongs to the domain of attraction of the \emph{Weibull} distribution. Finally, if $F \in G_0$, then $F$ belongs to the domain of attraction of the \emph{Gumbel} distribution. This corresponds to the border case between light and heavy tails, and includes, e.g., the case of a normal distribution.

One of the most commonly applied classical estimators of the extreme value index, suitable for $\gamma > 0$, is the Hill estimator introduced in \cite{hill},
$$
\hat{\gamma}_H(\textbf{y}) = \frac{1}{k_n}\sum\limits_{m=0}^{k_n-1} \log \frac{(\textbf{y})_{(n-m,n)}}{(\textbf{y})_{(n-k_n,n)}},
$$
where $(\textbf{y})_{(n,n)} \geq \dots \geq (\textbf{y})_{(1, n)}$ are the order statistics of the sample $ \textbf{y} $, and $1 \leq k_n \leq n$ is a sequence of thresholds for the portion of observations that are considered to form the tail. Common choices for the threshold include, e.g., $ k_n = \sqrt{n} $ and $ k_n = \log(n) $.

Another well-known estimator, which in turn is valid for any value of $\gamma$, is the moment estimator introduced in \cite{momentConsistency}. As in the Hill estimator, set
$$
M_n^{(j)}(\textbf{y}) = \frac{1}{k_n}\sum\limits_{m=0}^{k_n-1} \left( \log \frac{(\textbf{y})_{(n-m,n)}}{(\textbf{y})_{(n-k_n,n)}} \right)^j.
$$
Here $j=1,2,\ldots$ is given, and the Hill estimator corresponds to the choice $j=1$. The moment estimator is based on the choices $j=1,2$ and is given by
$$
\hat{\gamma}_M(\textbf{y}) = M_n^{(1)}(\textbf{y}) + 1 - \frac{1}{2}\left( 1-\frac{[M_n^{(1)}(\textbf{y})]^2}{M_n^{(2)}(\textbf{y})} \right)^{-1}.
$$
\textcolor{black}{In the next section, both the Hill estimator and the moment estimator are used to estimate the extreme value indices of the absolute values of the latent components in \eqref{eq:latent_model}.}

%
%

\section{Extreme value index estimation for latent variables}\label{sec:general_framework}
\textcolor{black}{Recall from Section \ref{sec:introduction} that we} consider an estimated sample $ \hat{\textbf{z}}_1, \ldots , \hat{\textbf{z}}_n$ of the latent vectors $ \textbf{z}_1, \ldots , \textbf{z}_n$ satisfying
\begin{align} \label{eq:general_model}
\hat{\textbf{z}}_i = \textbf{z}_i + \hat{\textbf{H}} \textbf{z}_i + \hat{\textbf{r}},
\end{align}
where the $ p \times p $ -matrix
$ \hat{\textbf{H}} = \mathcal{O}_p(c_n^{-1}) $, and the $ p $-vector $ \hat{\textbf{r}} = \mathcal{O}_p(c_n^{-1})$ for some rate $ c_n $. Here, and throughout the paper, the notation $X_n = \mathcal{O}_p(g_n)$ \textcolor{black}{is used to denote that the family of random variables $ g_n^{-1}X_n$ is uniformly tight.} Similarly, we use other Landau notation, such as $o(1)$ to indicate convergence towards zero. With $\rightarrow_p$, we denote convergence in probability, and with $\rightsquigarrow$, we indicate weak convergence, i.e., convergence in distribution.

\textcolor{black}{Recall further, that the} idea underlying the model \eqref{eq:general_model} is that the vector $ \hat{\textbf{z}}_i $ is an estimate of $ \textbf{z}_i $ obtained by solving some latent variable model. However, for the following results to hold, simply having the form \eqref{eq:general_model} is sufficient, regardless of how it originated.

A common assumption in extreme value literature as well as in the latent variable literature is to assume that each component $z_i^k$ of the true non-observable signals $\textbf{z}_i$ is strictly stationary, and has a univariate marginal $F_k$, i.e., each observation $z_i^k$ has marginal distribution $F_k$, for all $i$. One typical example is the case where observations are i.i.d., with components drawn from different distributions. Another typical example is the case where the components $z_i^k$ form different stationary series with marginals $F_k$. \textcolor{black}{However, while our main examples arise from stationary series falling into the above setting, our main results do not even require stationarity of the components $z^k$ (although it might be difficult to interpret the estimated extreme value index if the one dimensional marginals are not equal).}

It is also customary in the field of extreme value theory to assume that marginals do not have point-mass at zero. 
\textcolor{black}{In our case, this ensures that our logarithm-based estimators are well-defined.}
That is, in the general non-stationary case, we assume that, for each component $k=1,2,\ldots,p$, we have
\begin{equation}
\label{eq:general-no-zero-mass}
\lim_{\delta \to 0}\inf_{i\geq 1}\mathbb{P}\left(|z_i^k|\geq \delta\right) = 1.
\end{equation}
In the sequel, \eqref{eq:general-no-zero-mass} is always assumed, even if it is not explicitly stated. Note that \eqref{eq:general-no-zero-mass} is a natural assumption and not very restrictive. First of all, \eqref{eq:general-no-zero-mass} implies that $\mathbb{P}(z_i^k = 0)=0$ for all $i$ and $k$. Moreover, in the case of equal marginals, \eqref{eq:general-no-zero-mass} is equivalent to $\mathbb{P}(z_i^k = 0)=0$. In the general case, \eqref{eq:general-no-zero-mass} excludes also the \textcolor{black}{situations} where \textcolor{black}{the} observations come from a sequence of distributions $F_{i,k}$ that approach a distribution having point mass at zero.

\textcolor{black}{In the sequel}, the notation $ | \textbf{z}^k | $ refers to the sample $ |z_{1k}|, \ldots , |z_{nk}| $ of the absolute values of the $ k $th latent series and $ | \textbf{z}^k |_{(m, n)} $ denotes the $ m $th largest element of $ | \textbf{z}^k | $.

Throughout the article, we make the following assumption.

\begin{assumption}\label{assu:component_rates}
	For all $ k = 1, \ldots , p $, there exists deterministic sequences $ a_{nk}, b_{nk}$ for which the $ k $th component $ z_{ik} $ of $ \textbf{z}_{i} $ satisfies
	\[
	\frac{|\textbf{z}^k|_{(n, n)} - b_{nk}}{a_{nk}} = \mathcal{O}_p(1).
	\]
\end{assumption}
We stress that Assumption \ref{assu:component_rates} is very relaxed, and in the extreme value theory literature it is usually taken as granted, without explicitly stating it. Indeed, if the observations are independent with a distribution function $F$, then Assumption~\ref{assu:component_rates} follows immediately whenever $F \in G_\gamma$, i.e., $F$ is in the domain of attraction of some extreme value distribution $G_\gamma$. Thus, in the case of independent observations, discussing \textcolor{black}{the} extreme value index $\gamma$ without Assumption \ref{assu:component_rates} is not sensible. More generally, Assumption \ref{assu:component_rates} follows immediately whenever $a_{nk}^{-1}\left(|\textbf{z}^k|_{(n, n)} -b_{nk}\right)$ converges towards some distribution.  For example, Assumption \ref{assu:component_rates} is trivially valid even in the totally degenerate case $z_{ik} = z_k$, for all~$i$.

The main contribution of this article is the derivation of sufficient conditions under which the asymptotic properties of the Hill and moment estimators are  preserved under Model \eqref{eq:general_model}. Intuitively, one would expect that these asymptotic properties remain the same, provided that $c_n^{-1}$ vanishes rapidly enough to compensate the growth of the \textcolor{black}{sample} maximum of the heaviest component. Theorem \ref{prop:consistency_and_limiting} and Theorem \ref{prop:consistency_and_limiting2} below contain the precise statements of this heuristic argument.  In the sequel, we use the notation $ g_{nk} = \max \{ a_{nk}, b_{nk} \}$.
\begin{theorem}\label{prop:consistency_and_limiting}
	Let Assumption \ref{assu:component_rates} hold and assume that,
	\begin{equation}
	\label{eq:key-consistency}
	\frac{\max_\ell \{ g_{n\ell} \}}{c_n} = o(1).
	\end{equation}
	Let $k \in \{1,\ldots,p\}$ be fixed and let $C_H$ and $C_M$ be arbitrary constants.
	\begin{itemize}
		\item[i)] If $		\hat{\gamma}_H(|\textbf{z}^k|) \rightarrow_p C_H$, then
		$		\hat{\gamma}_H(|\hat{\textbf{z}}^k|) \rightarrow_p C_H$.
		\item[ii)] If $		\hat{\gamma}_H(|\textbf{z}^k|) \rightarrow_p C_H$, $\dfrac{\max_\ell \{ g_{n\ell} \}}{c_n\hat{\gamma}_H(|\textbf{z}^k|)}\rightarrow_p 0$ and $		\hat{\gamma}_M(|\textbf{z}^k|) \rightarrow_p C_M$, then
		$		\hat{\gamma}_M(|\hat{\textbf{z}}^k|) \rightarrow_p~C_M$.	
	\end{itemize}
\end{theorem}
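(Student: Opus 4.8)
The plan is to reduce everything to a single uniform additive control of the perturbation on the top order statistics, and then substitute this into the closed forms of $\hat\gamma_H$ and $\hat\gamma_M$. Writing the $k$th coordinate of \eqref{eq:general_model} as $\hat{z}_{ik} = z_{ik} + \sum_{\ell=1}^{p}\hat{H}_{k\ell} z_{i\ell} + \hat{r}_k$, the triangle inequality gives $\bigl|\,|\hat{z}_{ik}| - |z_{ik}|\,\bigr| \le \sum_{\ell=1}^{p}|\hat{H}_{k\ell}|\,|z_{i\ell}| + |\hat{r}_k| \le \bar{\Delta}_n$ for every $i$, with $\bar{\Delta}_n := \sum_{\ell=1}^{p}|\hat{H}_{k\ell}|\,|\textbf{z}^\ell|_{(n,n)} + |\hat{r}_k|$, using $|z_{i\ell}| \le |\textbf{z}^\ell|_{(n,n)}$. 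Since $\hat{H}_{k\ell},\hat{r}_k = \mathcal{O}_p(c_n^{-1})$, $p$ is fixed, and Assumption~\ref{assu:component_rates} yields $|\textbf{z}^\ell|_{(n,n)} = \mathcal{O}_p(g_{n\ell})$, we get $\bar{\Delta}_n = \mathcal{O}_p\!\bigl(c_n^{-1}\max_{\ell} g_{n\ell}\bigr)$, which is $o_p(1)$ by \eqref{eq:key-consistency}. A uniform additive bound survives sorting, so $\bigl|\,|\hat{\textbf{z}}^k|_{(n-m,n)} - |\textbf{z}^k|_{(n-m,n)}\,\bigr| \le \bar{\Delta}_n$ for every $m$, in particular for the threshold.

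Next I would show that the threshold $u_n := |\textbf{z}^k|_{(n-k_n,n)}$ is bounded away from zero in probability: by \eqref{eq:general-no-zero-mass}, $q_\delta := \sup_i \mathbb{P}(|z_{ik}| < \delta) \to 0$ as $\delta\to 0$, and since $u_n < \delta$ forces at least $n-k_n$ of the $|z_{ik}|$ below $\delta$, Markov's inequality gives $\mathbb{P}(u_n < \delta) \le n q_\delta/(n-k_n)$, whose limit is $q_\delta$ (the sequence $k_n$ being of smaller order than $n$); letting $\delta\to 0$ shows $u_n^{-1} = \mathcal{O}_p(1)$. Hence $\rho_n := \bar{\Delta}_n/u_n = o_p(1)$, the perturbed threshold $|\hat{\textbf{z}}^k|_{(n-k_n,n)} \ge u_n - \bar{\Delta}_n$ is eventually positive, and both perturbed estimators are well defined with probability tending to one. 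Using $|\textbf{z}^k|_{(n-m,n)} \ge u_n$ for $0\le m\le k_n$, each top log-spacing of the perturbed sample can be written $\log\frac{|\hat{\textbf{z}}^k|_{(n-m,n)}}{|\hat{\textbf{z}}^k|_{(n-k_n,n)}} = \log\frac{|\textbf{z}^k|_{(n-m,n)}}{u_n} + \log(1+s_m) - \log(1+t)$ with $|s_m|,|t| \le \rho_n$; the bound $|\log(1+s)| \le |s|/(1-|s|)$ for $|s|<1$ then gives $\bigl|\hat\gamma_H(|\hat{\textbf{z}}^k|) - \hat\gamma_H(|\textbf{z}^k|)\bigr| \le 2\rho_n/(1-\rho_n) = o_p(1)$, which with $\hat\gamma_H(|\textbf{z}^k|)\rightarrow_p C_H$ proves~i).

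For ii), squaring the same expansion through $a^2-b^2=(a-b)(a+b)$ and using that $\log(|\textbf{z}^k|_{(n-m,n)}/u_n)\ge 0$ averages to $M_n^{(1)}(|\textbf{z}^k|) = \hat\gamma_H(|\textbf{z}^k|)$ yields $\bigl|M_n^{(2)}(|\hat{\textbf{z}}^k|) - M_n^{(2)}(|\textbf{z}^k|)\bigr| \le \frac{2\rho_n}{1-\rho_n}\bigl(2M_n^{(1)}(|\textbf{z}^k|) + \frac{2\rho_n}{1-\rho_n}\bigr)$, so $M_n^{(1)}$ is perturbed by $\mathcal{O}_p(\rho_n)$ and $M_n^{(2)}$ by $\mathcal{O}_p(\rho_n M_n^{(1)}) + \mathcal{O}_p(\rho_n^2)$. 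The main obstacle is that $\hat\gamma_M$ depends on the ratio $(M_n^{(1)})^2/M_n^{(2)}$, which separate convergence of numerator and denominator need not stabilize when $C_H = 0$, since then both tend to zero; this is exactly where the extra hypothesis enters, as $\max_{\ell} g_{n\ell}/\bigl(c_n\hat\gamma_H(|\textbf{z}^k|)\bigr)\rightarrow_p 0$ says precisely $\rho_n = o_p\bigl(M_n^{(1)}(|\textbf{z}^k|)\bigr)$. Combining this with the elementary Jensen bound $M_n^{(2)}(|\textbf{z}^k|) \ge \bigl(M_n^{(1)}(|\textbf{z}^k|)\bigr)^2$, dividing numerator and denominator of the perturbed ratio by $M_n^{(2)}(|\textbf{z}^k|)$, and estimating $\rho_n M_n^{(1)}/M_n^{(2)} \le \rho_n/M_n^{(1)} = o_p(1)$ and $\rho_n^2/M_n^{(2)} \le (\rho_n/M_n^{(1)})^2 = o_p(1)$, one gets $(M_n^{(1)}(|\hat{\textbf{z}}^k|))^2/M_n^{(2)}(|\hat{\textbf{z}}^k|) - (M_n^{(1)}(|\textbf{z}^k|))^2/M_n^{(2)}(|\textbf{z}^k|)\rightarrow_p 0$. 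Finally, $\hat\gamma_M(|\textbf{z}^k|)\rightarrow_p C_M$ and $M_n^{(1)}(|\textbf{z}^k|)\rightarrow_p C_H$ force $(M_n^{(1)}(|\textbf{z}^k|))^2/M_n^{(2)}(|\textbf{z}^k|)\rightarrow_p r$ for some $r\in[0,1)$, since $1-(M_n^{(1)})^2/M_n^{(2)}\in(0,1]$ always; the map $(x,y)\mapsto x+1-\tfrac12(1-y)^{-1}$ is continuous at $(C_H,r)$, so the continuous mapping theorem applied to $\bigl(M_n^{(1)}(|\hat{\textbf{z}}^k|),\,(M_n^{(1)}(|\hat{\textbf{z}}^k|))^2/M_n^{(2)}(|\hat{\textbf{z}}^k|)\bigr)$ delivers $\hat\gamma_M(|\hat{\textbf{z}}^k|)\rightarrow_p C_M$.
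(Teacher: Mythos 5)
Your proposal is correct, and its overall architecture matches the paper's: a uniform bound on the perturbation of the top $k_n+1$ order statistics, a proof that the threshold order statistic stays bounded away from zero, conversion to a uniform relative error $\rho_n = \mathcal{O}_p(c_n^{-1}\max_\ell g_{n\ell})$, and then direct substitution into $M_n^{(1)}$ and $M_n^{(2)}$ with the Cauchy--Schwarz bound $[M_n^{(1)}]^2 \le M_n^{(2)}$ and the extra hypothesis used exactly where $C_H=0$ would cause trouble. Where you differ is in the auxiliary machinery, and mostly you are more elementary: your counting-plus-Markov argument for $\mathbb{P}(u_n<\delta)\le n q_\delta/(n-k_n)$ replaces the paper's Paley--Zygmund inequality (Lemma \ref{lem:non_vanishing_ios}) and is arguably simpler while equally dependence-free; your "a uniform sup-norm bound survives sorting" replaces the paper's Weyl-inequality route (Lemmas \ref{lem:monotone_order}--\ref{lem:weyl}) --- this is a true standard fact, but since it is precisely what those two lemmas exist to prove, you should state and prove it rather than assert it; your explicit bound $|\log(1+s)|\le |s|/(1-|s|)$ replaces the mean-value-theorem Lemma \ref{lem:max_taylor}. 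The more substantive divergence is the endgame for the moment estimator: the paper proves the quantitative bound $|\hat{\gamma}_M(|\hat{\textbf{z}}^k|)-\hat{\gamma}_M(|\textbf{z}^k|)| = \mathcal{O}_p(h_n/\hat{\gamma}_H(|\textbf{z}^k|))$ via the identity $(1-a)^{-1}-(1-b)^{-1}=\frac{a-b}{1-a}(1-b)^{-1}$ and tightness of the resolvent terms, whereas you deduce only $o_p(1)$ for the ratio difference and finish by identifying the limit $r=(M_n^{(1)})^2/M_n^{(2)}\rightarrow_p r\in[0,1)$ and applying the continuous mapping theorem (your identification of $r$ from the two assumed limits is a nice touch and correct). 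Both arguments are valid for Theorem \ref{prop:consistency_and_limiting}; the paper's explicit rate has the advantage of being directly reusable, after multiplication by $\sqrt{k_n}$, in the proof of Theorem \ref{prop:consistency_and_limiting2}, which your qualitative CMT step would not give without rerunning the estimates.
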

Note that in the above result it is not required that $C_H$ and $C_M$ are the correct extreme value indices --- any constants suffice. Indeed, the part i) of Theorem \ref{prop:consistency_and_limiting} simply states that whenever the Hill estimator based on the ''true'' latent signals converges towards some constant, then the Hill estimator based on the estimated latent signals converges towards the same constant. The reason behind our formulation is that usually, as is the case for independent observations, the Hill estimator converges towards $\max(0,\gamma)$, see \cite{de2007extreme}, pp. 101. In other words, the Hill estimator vanishes for distributions that are not heavy tailed. For such distributions, one can then apply the moment estimator. Part ii) of Theorem \ref{prop:consistency_and_limiting} says that whenever both, the Hill estimator and the moment estimator based on the true latent signals $|\textbf{z}^k|$, converge towards any constants, then the Hill and the moment estimator based on the estimated latent signals converge towards the same constants. As in most cases the Hill estimator converges towards $\max(0,\gamma)$, and does not explode, part ii) of Theorem \ref{prop:consistency_and_limiting} implies that asymptotic properties of the moment estimator are inherited to the estimated model as well. The extra condition in part ii) concerns the case when the Hill estimator converges towards zero\textcolor{black}{, $ C_H = 0$, } and ensures that this convergence is not too rapid in comparison to the growth of the heaviest tail. In many cases of interest, the convergence rate of the Hill estimator is $\sqrt{k_n}$. This leads to the same condition as in Theorem~\ref{prop:consistency_and_limiting2}, and can be achieved by a suitable choice of $k_n$. Finally, we stress that  an  examination of the proof of Theorem~\ref{prop:consistency_and_limiting} reveals that the item ii) is valid as long as the Hill estimator does not tend to infinity. Thus one can safely apply the moment estimator for light tailed distributions under Model~\eqref{eq:general_model}. 

In order to gain better understanding on the behavior of the estimators, we next consider their limiting distributions.
\begin{theorem}\label{prop:consistency_and_limiting2}
	Let Assumption \ref{assu:component_rates} hold and assume that,
	\begin{equation}
	\label{eq:key-normality}
	\frac{\sqrt{k_n} \max_\ell \{ g_{n\ell} \}}{c_n} = o(1).
	\end{equation}
	Let \textcolor{black}{$k \in \{1,\ldots,p\}$} be fixed \textcolor{black}{and let $C_H,C_M, \mu_H,\mu_M, \sigma_H$, and $\sigma_M$ be arbitrary constants.}
	\begin{itemize}
		\item[i)] If
		$
		\sqrt{k_n} \left( \hat{\gamma}_H(|\textbf{z}^k|) - C_H \right) \rightsquigarrow \mathcal{N}(\mu_{H}, \sigma_{H}^2),
		$
		then
		\begin{align*}
		\sqrt{k_n} \left( \hat{\gamma}_H(|\hat{\textbf{z}}^k|) - C_H \right) &\rightsquigarrow \mathcal{N}(\mu_{H}, \sigma_{H}^2).
		\end{align*}
		\item[ii)] If $\hat{\gamma}_H(|\textbf{z}^k|) \rightarrow_p C_H$, $\dfrac{\sqrt{k_n}\max_\ell \{ g_{n\ell} \}}{c_n\hat{\gamma}_H(|\textbf{z}^k|)}\rightarrow_p 0$, and
		$
		\sqrt{k_n} \left( \hat{\gamma}_M(|\textbf{z}^k|) - C_M \right) \rightsquigarrow \mathcal{N}(\mu_{M}, \sigma_{M}^2),
		$
		then
		\begin{align*}
		\sqrt{k_n} \left( \hat{\gamma}_M(|\hat{\textbf{z}}^k|) - C_M \right) &\rightsquigarrow \mathcal{N}(\mu_{M}, \sigma_{M}^2).
		\end{align*}
	\end{itemize}
	
\end{theorem}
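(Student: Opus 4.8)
The plan is a Slutsky-type reduction: for part~i) it suffices to show $\sqrt{k_n}\big(\hat\gamma_H(|\hat{\textbf{z}}^k|) - \hat\gamma_H(|\textbf{z}^k|)\big)\rightarrow_p 0$, since adding an $o_p(1)$ term to the weakly convergent sequence $\sqrt{k_n}(\hat\gamma_H(|\textbf{z}^k|) - C_H)$ does not change its limit, and part~ii) is treated the same way with $\hat\gamma_M$. Put $\varepsilon_{ik} := (\hat{\textbf{H}}\textbf{z}_i)_k + \hat r_k = \sum_{\ell=1}^p\hat H_{k\ell}z_{i\ell} + \hat r_k$, so $\hat z_{ik} = z_{ik} + \varepsilon_{ik}$ and $\big||\hat z_{ik}| - |z_{ik}|\big|\le|\varepsilon_{ik}|$. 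The element-wise rates $\hat H_{k\ell},\hat r_k = \mathcal{O}_p(c_n^{-1})$ together with $\max_i|z_{i\ell}| = |\textbf{z}^\ell|_{(n,n)} = \mathcal{O}_p(g_{n\ell})$ (the latter from Assumption~\ref{assu:component_rates}) give the uniform perturbation bound
\[
E_n := \max_{1\le i\le n}\big||\hat z_{ik}| - |z_{ik}|\big| \le \sum_{\ell=1}^p|\hat H_{k\ell}|\,|\textbf{z}^\ell|_{(n,n)} + |\hat r_k| = \mathcal{O}_p\!\left(\frac{\max_\ell g_{n\ell}}{c_n}\right).
\]
Since $|a_i - b_i|\le E$ for all $i$ entails the same bound for the respective $m$th order statistics, we get $\big||\hat{\textbf{z}}^k|_{(m,n)} - |\textbf{z}^k|_{(m,n)}\big|\le E_n$ for every $m$. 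Because the marginals have no atom at zero, \eqref{eq:general-no-zero-mass}, and $k_n$ obeys the usual intermediate-order conditions, the threshold $u_n := |\textbf{z}^k|_{(n-k_n,n)}$ satisfies $u_n^{-1} = \mathcal{O}_p(1)$; hence $E_n/u_n = \mathcal{O}_p(\max_\ell g_{n\ell}/c_n) = o_p(1)$ and, by \eqref{eq:key-normality}, $\sqrt{k_n}\,E_n/u_n = o_p(1)$.

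For part~i), for $0\le m\le k_n$ we have $|\textbf{z}^k|_{(n-m,n)}\ge u_n$, hence $|\hat{\textbf{z}}^k|_{(n-m,n)}/|\textbf{z}^k|_{(n-m,n)}\in[1-E_n/u_n,\,1+E_n/u_n]$, and on an event of probability tending to one (where $E_n/u_n\le 1/2$) this gives $\big|\log|\hat{\textbf{z}}^k|_{(n-m,n)} - \log|\textbf{z}^k|_{(n-m,n)}\big|\le 2E_n/u_n$. Thus each of the $k_n$ summands defining $M_n^{(1)}$ moves by at most $4E_n/u_n$, so $\big|\hat\gamma_H(|\hat{\textbf{z}}^k|) - \hat\gamma_H(|\textbf{z}^k|)\big|\le 4E_n/u_n$, and $\sqrt{k_n}$ times this is $o_p(1)$; Slutsky proves part~i). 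Writing $L_m,\widehat L_m$ for the log-ratios built from $|\textbf{z}^k|,|\hat{\textbf{z}}^k|$, the same estimate gives $|\widehat L_m - L_m|\le 4E_n/u_n$, and combining $\widehat L_m^2 - L_m^2 = (\widehat L_m - L_m)(\widehat L_m + L_m)$ with $L_m\ge 0$ and $\frac{1}{k_n}\sum_{m=0}^{k_n-1}L_m = \hat\gamma_H(|\textbf{z}^k|)\rightarrow_p C_H$ produces $\big|M_n^{(2)}(|\hat{\textbf{z}}^k|) - M_n^{(2)}(|\textbf{z}^k|)\big| = \mathcal{O}_p\big((E_n/u_n)\,\hat\gamma_H(|\textbf{z}^k|) + (E_n/u_n)^2\big)$; in particular $\sqrt{k_n}(M_n^{(1)}(|\hat{\textbf{z}}^k|) - M_n^{(1)}(|\textbf{z}^k|))$ and $\sqrt{k_n}(M_n^{(2)}(|\hat{\textbf{z}}^k|) - M_n^{(2)}(|\textbf{z}^k|))$ are both $o_p(1)$.

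For part~ii), set $M^{(j)} := M_n^{(j)}(|\textbf{z}^k|)$, $\widetilde M^{(j)} := M_n^{(j)}(|\hat{\textbf{z}}^k|)$, $R_n := [M^{(1)}]^2/M^{(2)}$, $\widetilde R := [\widetilde M^{(1)}]^2/\widetilde M^{(2)}$. Since $\hat\gamma_M(|\textbf{z}^k|) = M^{(1)} + 1 - \frac12(1-R_n)^{-1}$ and similarly for $\hat\gamma_M(|\hat{\textbf{z}}^k|)$,
\[
\hat\gamma_M(|\hat{\textbf{z}}^k|) - \hat\gamma_M(|\textbf{z}^k|) = (\widetilde M^{(1)} - M^{(1)}) - \frac12\cdot\frac{\widetilde R - R_n}{(1-\widetilde R)(1-R_n)}.
\]
From $\hat\gamma_M(|\textbf{z}^k|)\rightarrow_p C_M$ and $\hat\gamma_H(|\textbf{z}^k|) = M^{(1)}\rightarrow_p C_H$ one gets $(1-R_n)^{-1}\rightarrow_p 2(C_H+1-C_M)$, so $R_n = \mathcal{O}_p(1)$ and $|1-R_n|$ is bounded away from zero. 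With $\rho_n := (E_n/u_n)/\hat\gamma_H(|\textbf{z}^k|)$, the extra hypothesis of part~ii) says exactly $\sqrt{k_n}\,\rho_n = \mathcal{O}_p\big(\sqrt{k_n}\max_\ell g_{n\ell}/(c_n\hat\gamma_H(|\textbf{z}^k|))\big) = o_p(1)$, hence also $\rho_n = o_p(1)$. Feeding the bounds of the previous paragraph and $R_n = \mathcal{O}_p(1)$ into the identity
\[
\widetilde R - R_n = \frac{(\widetilde M^{(1)} - M^{(1)})(\widetilde M^{(1)} + M^{(1)})}{\widetilde M^{(2)}} - \frac{[M^{(1)}]^2(\widetilde M^{(2)} - M^{(2)})}{\widetilde M^{(2)}M^{(2)}}
\]
gives first $\widetilde M^{(2)}/M^{(2)}\rightarrow_p 1$ and then $\widetilde R - R_n = \mathcal{O}_p(\rho_n + \rho_n^2)$, so $\sqrt{k_n}(\widetilde R - R_n) = o_p(1)$; moreover $1-\widetilde R = (1-R_n) - o_p(1)$ stays bounded away from zero. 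Combining these with $\sqrt{k_n}(\widetilde M^{(1)} - M^{(1)}) = o_p(1)$ from the previous paragraph gives $\sqrt{k_n}(\hat\gamma_M(|\hat{\textbf{z}}^k|) - \hat\gamma_M(|\textbf{z}^k|)) = o_p(1)$, and Slutsky completes part~ii).

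The step I expect to require the most care is this last one in the border case $C_H = 0$: there $M^{(1)},M^{(2)}\rightarrow_p 0$, so the map $(x,y)\mapsto x + 1 - \frac12(1-x^2/y)^{-1}$ underlying the moment estimator is evaluated near a singularity and its derivatives explode, and a crude continuity bound is useless. The way around this is to work with the explicit formula rather than Taylor-expanding it, reducing everything to controlling $\widetilde R - R_n$ relative to $\hat\gamma_H(|\textbf{z}^k|) = M^{(1)}$; the extra normalization hypothesis of part~ii) is tailored precisely so that $\sqrt{k_n}\,\rho_n = o_p(1)$, and it collapses to \eqref{eq:key-normality} whenever $C_H > 0$. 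A secondary technical point, already needed in the first paragraph, is establishing $u_n^{-1} = \mathcal{O}_p(1)$ in the possibly non-stationary setting, where the uniform-in-$i$ form of \eqref{eq:general-no-zero-mass} is essential.
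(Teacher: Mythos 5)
Your proposal is correct and follows essentially the same route as the paper: a uniform $\mathcal{O}_p\!\left(\max_\ell \{g_{n\ell}\}/c_n\right)$ bound on the relative perturbation of the top $k_n$ order statistics (the paper's Lemmas \ref{lem:weyl}--\ref{lem:maximum}), from which $|\hat\gamma_H(|\hat{\mathbf{z}}^k|)-\hat\gamma_H(|\mathbf{z}^k|)|=\mathcal{O}_p(h_n)$ and $|\hat\gamma_M(|\hat{\mathbf{z}}^k|)-\hat\gamma_M(|\mathbf{z}^k|)|=\mathcal{O}_p\!\left(h_n/\hat\gamma_H(|\mathbf{z}^k|)\right)$ are derived with the same identity for $(1-a)^{-1}-(1-b)^{-1}$ and the same Cauchy--Schwarz bounds, followed by the Slutsky step using $\sqrt{k_n}\,h_n=o(1)$. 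The only ingredient you assert rather than prove is $|\mathbf{z}^k|_{(n-k_n,n)}^{-1}=\mathcal{O}_p(1)$, which you correctly flag as essential in the non-stationary dependent setting; the paper establishes exactly this via a Paley--Zygmund argument in Lemma \ref{lem:non_vanishing_ios}.
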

\textcolor{black}{In the above result, the constants $\mu_H,\mu_M, \sigma_H$, and $\sigma_M$ can be computed explicitly in most cases, their exact values depending on the so-called second order conditions.} For details, we refer to \cite{de2007extreme}. We also remark that in our proof, we could easily replace the convergence rate $\sqrt{k_n}$ with some other rate, or the limiting normal distribution with some other distribution. The underlying reason for the above formulation is that we are not aware of any asymptotic results for extreme \textcolor{black}{value} index estimators where the rate is other than $\sqrt{k_n}$  or where the limiting distribution is not normal.

We end this section by discussing the strictness of the key conditions $\max_{\ell}\{g_{n\ell} \} = o(c_n)$ and $\sqrt{k_n}\max_{\ell}\{g_{n\ell}\} = o(c_n)$. These conditions state that the convergence rate $ c_n $ of the estimated latent sample to the true latent sample must be sufficiently fast compared both to $ \sqrt{k_n} $, the square root of the tail threshold, and to $ \max_\ell \{ g_{n\ell} \} $, the heaviness of the heaviest of the latent components. 
Moreover, the rate $ k_n $ can be seen as a type of a tuning parameter. Choosing a faster growing $ k_n $ will make the Hill estimator converge more rapidly, but it will, at the same time, limit the range of distributions whose extreme value indices we can estimate in the first place, and vice versa.

To shed further light on these conditions, we consider an example. 
Assume that there exists at least one latent component belonging to the domain of attraction of the Fr{\'e}chet distribution, i.e., $ \max_\ell \{ g_{n\ell} \} = \mathcal{O}(n^\gamma) $ for some $ \gamma > 0 $ (cf. Lemma \ref{lem:interpretation_g} in the supplementary Appendix \ref{sec:proofs}). Now, letting $ k_n = n^\alpha $ for some $ \alpha > 0 $ and under the standard rate $ c_n = \sqrt{n} $, we end up \textcolor{black}{with} the restriction $ \gamma < \frac{1}{2} (1 - \alpha) $. Hence, putting the tail threshold $ k_n $ sufficiently small, we see that extreme value index estimation is feasible as long as the heaviest Fr{\'e}chet component among the latent variables has its extreme value index smaller than $ 1/2 $, that is, all latent components have finite variance. Heavier components, i.e., ones without second moments, can be captured through estimators which yield faster convergence rates $ c_n $ than the usual $ \sqrt{n} $ for the model estimation. Conversely, if the convergence rate $c_n$ is slower than the usual $\sqrt{n}$ (see, e.g., \cite{lietzen2019}), then $c_n$ might not be sufficient to compensate too heavy tails, and, e.g., assumptions on the existence of higher moments are required.

\section{Example models}\label{sec:two_models}

In this section, we illustrate the applicability of our main results by considering two popular example models: stationary independent component model and stationary second order source separation model. For simplicity, we only consider the Hill estimator, although the following analysis could be easily extended for the moment estimator as well (see Remark \ref{remark:moment-estimator}). Throughout, we assume that the heaviest component has index $\gamma > 0$, often implying that $ \max_\ell \{ g_{n\ell} \} = \mathcal{O}(n^\gamma),$ see the examples below. As the rate $c_n = \sqrt{n}$ is the best possible that one \textcolor{black}{can usually} expect, we also assume $\gamma<\frac12$. This ensures the square integrability of all of our random variables, which is also a minimum requirement for  \eqref{eq:general_model} to hold for the standard estimation procedures in our example models.

Let now \textcolor{black}{$k \in \{1,\ldots,p\}$} be fixed. We illustrate our results in cases where both Theorem~\ref{prop:consistency_and_limiting} and Theorem \ref{prop:consistency_and_limiting2} are applicable. Thus, in order to obtain limiting normality for the Hill estimator $\hat{\gamma}_H(|\textbf{z}^k|)$ based on the true values $|\textbf{z}^k|$, we impose a second order condition for the marginal distribution $F$ of $|z^k|$. The distribution $F$ is called \emph{second order regularly varying} (with index $\gamma$) if there exists a positive or negative function $A$ with the property $\lim_{t\to \infty} A(t) =0$ such that, for all $x>0$,
\begin{equation}
\label{eq:2nd-regularly-varying}
\lim_{t\to\infty} \dfrac{\frac{U(tx)}{U(t)}-x^\gamma}{A(t)} = x^\gamma \frac{x^\rho-1}{\rho},
\end{equation}
holds for some real number $\rho\leq 0$. Here the function $U$ is given by
$$
U = \left(\frac{1}{1-F}\right)^{\leftarrow},
$$
where $^{\leftarrow}$ denotes the left-continuous (pseudo-)inverse function. Then, in the case of independent observations, the limiting normality,
\begin{equation}
\label{eq:Hill-normality}
\sqrt{k_n} \left( \hat{\gamma}_H(|\textbf{z}^k|) - \textcolor{black}{\gamma} \right) \rightsquigarrow \mathcal{N}\left(\frac{\lambda}{1-\rho}, \sigma^2\right),
\end{equation}
holds, provided that $\lim_{n\to \infty} \sqrt{k_n}A\left(\frac{n}{k_n}\right) = \lambda \in \mathbb{R}$. This leads to an upper bound on the rate at which $k_n$ can grow. Similarly, conditions of Theorems \ref{prop:consistency_and_limiting} and  \ref{prop:consistency_and_limiting2} give upper bounds for the rate at which $k_n$ can grow. Thus, we can obtain limiting normality (and consistency) by choosing a not-too-rapidly growing sequence $k_n$, at the cost of a slower rate of convergence. For details on the limiting normality of the Hill estimator in the case of i.i.d. observations, see \cite{de2007extreme}, and in the case of stationary dependent observations, see \cite{Haan-et-al-2016} and the references therein.

\subsection{Independent component model}
\label{subsec:ICA}
In independent component analysis (ICA) the observed \textcolor{black}{$ p $-vectors $ \textbf{x}_1, \ldots , \textbf{x}_n $} are assumed to be a random sample from the independent component (IC) model,
\begin{align}\label{eq:ic_model}
\textbf{x} = \boldsymbol{\Omega} \textbf{z} + \boldsymbol{\mu},
\end{align}
where the latent \textcolor{black}{$ p $-vector $\textbf{z} $} has independent components, $\boldsymbol{\Omega}\in \mathbb{R}^{p\times p}$ is invertible and $\boldsymbol{\mu}\in \mathbb{R}^p$ is a location parameter \citep{hyvarinen2000independent} . The objective in ICA is find an unmixing matrix $\boldsymbol{\Gamma} \in \mathbb{R}^{p \times p}$, such that $\boldsymbol{\Gamma} \textbf{x}$ has independent components. Standard theory then shows that if at most one of the ICs is Gaussian, any such solution coincides with $ \textbf{z} $ up to scaling, order and signs of the components. The scales can be fixed by second order standardisation of $ \textbf{z} $. This guarantees that all the solutions are of the form $ \boldsymbol{\Gamma} = \textbf{P} \textbf{J} \boldsymbol{\Omega}^{-1}$ where $ \textbf{P} \in \mathbb{R}^{p \times p}$ is a permutation matrix and $ \textbf{J} \in \mathbb{R}^{p \times p}$ is a sign-change matrix (diagonal matrix with diagonal entries equal to $ \pm 1 $). In our approach in assessing extreme behaviour, the sign ambiguity is of no concern, as we consider the absolute values of the source components. Moreover, the order of the components is irrelevant, if one is interested in modelling the tail index of the component with the highest risk.


Numerous estimators $ \hat{\boldsymbol{\Gamma}} $ of the unmixing matrix have been proposed and under suitable assumptions and standardisations, most estimators converge to $ \boldsymbol{\Omega}^{-1} $ at some rate $ c_n $,
\begin{align}\label{eq:root_n_ica}
c_n \left( \hat{\boldsymbol{\Gamma}} \boldsymbol{\Omega} - \textbf{I}_p \right) = \mathcal{O}_p(1).
\end{align}
Typically, in the context of i.i.d. observations, we have $ c_n = \sqrt{n} $. See \cite{miettinen2015fourth} for several examples including FastICA \citep{hyvarinen1999fast}, fourth order blind identification (FOBI) \citep{cardoso1989source}, and joint approximate diagonalization of eigenmatrices (JADE) \citep{cardoso1993blind}. Also the ICA-estimators based on the simultaneous diagonalization of two symmetrized scatter matrices \citep{oja2006scatter, nordhausen2008robust} can be shown to have the rate $ \sqrt{n} $, assuming that the applied symmetrized scatter matrices have the same convergence rate.

\textcolor{black}{Assuming} that $ \hat{\boldsymbol{\Gamma}} $ is of the form \eqref{eq:root_n_ica}, the estimated latent vectors can be written as
\[
\hat{\textbf{z}}_i = \hat{\boldsymbol{\Gamma}} \left( \textbf{x}_i - \bar{\textbf{x}} \right) = \hat{\boldsymbol{\Gamma}} \boldsymbol{\Omega} \left( \textbf{z}_i - \bar{\textbf{z}} \right) = \textbf{z}_i + \left( \hat{\boldsymbol{\Gamma}} \boldsymbol{\Omega} - \textbf{I}_p \right) \textbf{z}_i - \hat{\boldsymbol{\Gamma}} \boldsymbol{\Omega} \bar{\textbf{z}}.
\]
Writing now $ \hat{\textbf{H}} := \hat{\boldsymbol{\Gamma}} \boldsymbol{\Omega} - \textbf{I}_p $ and $ \hat{\textbf{r}} := - \hat{\boldsymbol{\Gamma}} \boldsymbol{\Omega} \bar{\textbf{z}} $, we observe that we have arrived to the form~\eqref{eq:general_model}. \textcolor{black}{By the assumption that} $ \max_\ell \{ g_{n\ell} \} = \mathcal{O}(n^\gamma) $ with $\gamma<\frac12$, we observe that \eqref{eq:key-consistency} is automatically valid, and \eqref{eq:key-normality} is valid for suitably chosen sequence $k_n$. We stress that $\gamma<\frac12$ guarantees the existence of second moments, while usually standard \textcolor{black}{ICA} methods operate on higher-order information making even stronger moment assumptions. For example, the $ \sqrt{n} $-consistency for FOBI requires the existence of finite eighth moments of the latent variables  \citep{ilmonen2010characteristics}. 
By using squared FastICA with the hyperbolic tangent \citep{miettinen2017squared} as the ICA-estimator, one can reduce the order of the required moments to four. Finally, we stress that under independent observations drawn from a second order regular varying heavy tailed distribution, the Hill estimator $\hat{\gamma}_H(|\textbf{z}^k|)$ is consistent and asymptotically normal (see \cite{de2007extreme}), and while different technical assumptions are required for the classical ICA-estimators, none of them interfere with our assumption that guarantees the consistency and limiting normality of the Hill estimator. 
Thus, as a conclusion, we can safely apply Theorem \ref{prop:consistency_and_limiting} and Theorem \ref{prop:consistency_and_limiting2}.
\begin{remark}
	\label{remark:moment-estimator}
	In the above discussions we have considered only the Hill estimator. However, applying Theorem \ref{prop:consistency_and_limiting} or Theorem \ref{prop:consistency_and_limiting2} for the moment estimator in the ICA-context is straightforward. Indeed, under second order regularly varying tails and independence, the Hill estimator always converges to \textcolor{black}{$\max(0,\gamma)$}, and the moment estimator is both consistent and asymptotically normal. Thus it suffices to check the extra condition $\frac{\sqrt{k_n}\max_\ell \{ g_{n\ell} \}}{c_n\hat{\gamma}_H(|\textbf{z}^k|)}\rightarrow_p 0$. However, even if \textcolor{black}{$\gamma \leq 0$} we have $\sqrt{k_n}\hat{\gamma}_H(|\textbf{z}^k|) \rightsquigarrow \mathcal{N}(\mu_{H}, \sigma_{H}^2)$, and thus it suffices to choose the sequence $k_n$ such that $\frac{k_n\max_\ell \{ g_{n\ell} \}}{c_n} =o(1)$.
\end{remark}

\subsection{Second order source separation model}\label{subsec:BSS}
Our second example moves to the realm of signal processing and blind source separation (BSS). Like the IC model, also the second order BSS model is linear and based on the general location-scatter model. In the model, the observed run \textcolor{black}{$ \textbf{x}_1, \ldots , \textbf{x}_n $} of a stationary $ p $-variate time series is assumed to have the instantaneous latent representation,
\begin{align}\label{eq:bss_model}
\textbf{x}_i = \boldsymbol{\Omega} \textbf{z}_i + \boldsymbol{\mu}, \quad \textcolor{black}{i \in \{1, \ldots , n\}},
\end{align}
where the latent $ p $-variate time series $ \textbf{z}_i $ is stationary  and has standardized uncorrelated components, and $\boldsymbol{\Omega}\in \mathbb{R}^{p\times p}$ is of full rank. The location $\boldsymbol{\mu}\in \mathbb{R}^p$ is (by stationarity) trivial to estimate by using a standard average estimator, which provides a consistent estimator if the system is ergodic. Thus, for the sake of simplicity, it will be omitted in the following. 
Note also that the non-identifiability of signs and order holds in the BSS model as well. However, for our purposes this does not matter due to the reasons explained in Subsection~\ref{subsec:ICA}.

One standard approach to estimate $\textbf{z}_i$ is algorithm for multiple unknown signals extraction (AMUSE) \citep{tong-et-al} where the autocovariance matrices $ \boldsymbol{\Sigma}_\tau(\textbf{x}_i) = \mathbb{E} ( \textbf{x}_i \textbf{x}_{i + \tau}^\top ) $ for $\tau \in \{0,\tau_0\}$ are diagonalised simultaneously.
An extension of AMUSE that is less sensitive \textcolor{black}{to} the choice of $\tau_0$ is the second order blind identification (SOBI) \citep{belouchrani1997blind} algorithm where the autocovariance matrices $ \boldsymbol{\Sigma}_\tau(\textbf{x}_i) = \mathbb{E} ( \textbf{x}_i \textbf{x}_{i + \tau}^\top ) $ over a chosen set of lags $ \mathcal{T} = \{ \tau_1, \ldots , \tau_{|\mathcal{T}|} \} $ are jointly diagonalized. 
As in the IC-model, one would expect that the algorithm provides a
consistent estimator $ \hat{\boldsymbol{\Gamma}} $ with some rate $c_n$:
\begin{equation}
\label{eq:BSS-convergence}
c_n \left( \hat{\boldsymbol{\Gamma}} \boldsymbol{\Omega} - \textbf{I}_p \right) = \mathcal{O}_p(1).
\end{equation}
It turns out that \eqref{eq:BSS-convergence} holds true whenever
\begin{equation}
\label{eq:key-covariance}
c_n\left(\hat{\boldsymbol{\Sigma}}_\tau(\textbf{x}_i) - \boldsymbol{\Sigma}_\tau(\textbf{x}_i)\right) = \mathcal{O}_p(1), \quad \tau \in \mathcal{T},
\end{equation}
where $\hat{\boldsymbol{\Sigma}}_\tau(\textbf{x}_i)$ denotes the estimator of the autocovariance matrix $\boldsymbol{\Sigma}_\tau(\textbf{x}_i)$. The fact that \eqref{eq:key-covariance} implies \eqref{eq:BSS-convergence} is proved in the case of complex valued AMUSE in \cite{lietzen2019} with general rate $c_n$, and in the case of real valued SOBI (and its variants) in
\cite{miettinen2016separation} for the rate $c_n = \sqrt{n}$. It is also straightforward to check that the arguments of \cite{miettinen2016separation} apply with arbitrary rate function $c_n$. For examples with general rate $c_n$ instead of the standard $\sqrt{n}$, we refer to \cite{lietzen2019}.

\textcolor{black}{Equation \eqref{eq:key-covariance} is} the first key assumption on the rate of convergence for the autocovariance estimators, which on the other hand gives us our speed $c_n$. If $c_n$ is non-standard, \eqref{eq:key-consistency} gives us also the restriction $n^\gamma = o(c_n)$\textcolor{black}{, limiting the} possible values of $\gamma$. This can be seen as an interchange between moment assumptions and the speed at which the autocovariance estimators converge, as higher moments are required if the estimators converge slowly.

In order to make Theorem \ref{prop:consistency_and_limiting2} applicable, we also require that the Hill estimator 		$\hat{\gamma}_H(|\textbf{z}^k|)
$ satisfies limiting normality \eqref{eq:Hill-normality}. Compared to independent observations, the problem is much more subtle in the case of dependent sequences and one needs to pose extra assumptions  in addition to the second order regularly varying condition \eqref{eq:2nd-regularly-varying}. The extra assumptions are, roughly speaking, conditions that ensure the dependence to be weak enough so that the series ''behaves'' similarly as a series of independent observations. The precise definition of weak dependence or asymptotic independence varies in the literature. Usually asymptotic independence is encoded to mixing-conditions (for different notions of mixing-conditions and their relations, see the survey in \cite{Bradley}). It is known (see, e.g., \cite{Drees2000,Drees2003}) that \eqref{eq:Hill-normality} holds provided that \textcolor{black}{$|\textbf{z}^k|$} forms a $\beta$-mixing
stationary sequence such that some minor additional regularity conditions are met (see, e.g., conditions (a)-(c) of \cite{Haan-et-al-2016}). In particular, all these conditions are satisfied for the following sequences:
\begin{itemize}
	\item $m$-dependent process and AR(1)-process \citep{Drees2003, Rootzen1995, Rootzen2009},
	\item AR(p)-processes and MA($\infty$)-processes (with suitable assumptions on the coefficients) \citep{Drees2002,Resnick-Starica1997},
	\item MA(q)-processes \citep{Drees2002,Hsing,Rootzen1995,Rootzen2009},
	\item ARCH(1)-processes \citep{Drees2002,Drees2003},
	\item GARCH-processes \citep{Drees2000,Starica1999}.
\end{itemize}
We emphasize that the above examples form a very large and applicable class of processes. For details and more information on the above examples, see also \cite{Haan-et-al-2016}.

We now turn back to \textcolor{black}{extreme value} index estimation under the BSS model. In order to apply Theorem \ref{prop:consistency_and_limiting} or Theorem \ref{prop:consistency_and_limiting2}, it suffices to make sure that, for a given heavy tailed component \textcolor{black}{$|\textbf{z}^k|$}, the above mentioned conditions guaranteeing the limiting normality \eqref{eq:Hill-normality} for the Hill estimator $\hat{\gamma}_H(|\textbf{z}^k|)$ are satisfied. At the same time, one needs that \eqref{eq:key-covariance} holds, with some rate $c_n$ satisfying $n^\gamma = o(c_n)$. After that, it remains to choose $k_n$ not increasing too rapidly so that \eqref{eq:key-normality} holds as well. We next explore the connection between the given assumptions. We first observe that conditions required to ensure \eqref{eq:Hill-normality} are solely on the dependence structure and distribution of the given component \textcolor{black}{$|\textbf{z}^k|$} of interest. At the same time,  condition \eqref{eq:key-covariance} considers the rate of convergence of autocovariance estimators for all components simultaneously. 
Note that $\beta$-mixing and assumptions (a)-(c) of \cite{Haan-et-al-2016} do not imply convergence of the autocovariance estimators (as the conditions do not even require existence of second moments). Conversely, convergence of the autocovariance estimators is related to the so-called $\rho$-mixing (see \cite{Bradley} for precise definition) which does not imply $\beta$-mixing.  Thus, even the convergence rate of the autocovariance estimator of the component \textcolor{black}{$|\textbf{z}^k|$} does not provide any information regarding the validity of conditions implying limiting normality \eqref{eq:Hill-normality} for the Hill estimator. This means that the assumptions do not contradict, and also that in practice, one has to verify \eqref{eq:key-covariance} and the limiting normality of the Hill estimator separately.
\begin{remark}
	If all components are $\rho$-mixing, then the slowest decay of the $\rho$-mixing coefficients gives us an upper bound for $c_n$. Moreover, if one poses a stronger mode of mixing, $\phi$-mixing, for the sequence \textcolor{black}{$|\textbf{z}^k|$}, then \citep[p.112]{Bradley} the sequence is also both $\beta$- and $\rho$-mixing. For example, this is the case for $m$-dependent processes and GARCH-processes.
\end{remark}

\section{Simulations}\label{sec:examples}

In this section, we illustrate the tail index estimation under the second order source separation model of Section \ref{subsec:BSS} through a simulation study. Appendix \ref{sec:appendix_simulation} in the supplementary material presents a similar study for the independent component model in Section \ref{subsec:ICA}, with largely the same conclusions.

In this simulation study, we consider the $\mathbb{R}^3$-process $\textbf{z}$, where the components are $n$-length realizations, i.e., time series, of the independent stochastic processes in $ \tilde{\textbf{z}} = ( \texttt{ARCH}(1), \texttt{D}^{(1)}, \texttt{D}^{(2)})^\top. $ The first component of $ \tilde{\textbf{z}} $ is an ARCH(1)-process with the parameter vector $ (\alpha_0, \alpha_1) = (1/4, ({2^3 \sqrt{2/\pi}})^{-2/5}) $.
At time $t$, the second and third components are defined as $ \texttt{D}^{(1)}_t  = \left( B_{t+1}^{(1)} - B_t^{(1)}\right)^2 - 1 $ and $ \texttt{D}^{(2)}_t  = \left( B_{t+1}^{(2)} - B_t^{(2)}\right)^2 - 1 $, where $B^{(1)}$ denotes a fractional Brownian motion (fBm) with Hurst parameter 3/4 and  $B^{(2)}$ denotes a fBm with Hurst parameter 4/5, such that $B^{(1)}$ are $B^{(2)}$ are mutually independent. For a comprehensive study on fBm, see, e.g., \cite{nualart2006fractional}. Out of the three components, \textcolor{black}{the ARCH(1) process} has the largest theoretical \textcolor{black}{extreme value} index~1/5. We considered the sample sizes $n \in \{  300, 10^3, 10^4, 10^5, 10^6, 10^7 \}$ and the threshold sequence $k_n$ was chosen to be $k_n = \lfloor n^{1/4} \rfloor$. For each sample size, the simulation was iterated 2000 times.

As a preliminary step, the simulated observations \textcolor{black}{$\tilde{\textbf{z}}_i $} were centered. Here, the centered observations are denoted as \textcolor{black}{${\textbf{z}}_i$}. In every iteration $h \in \{1,\ldots 2000\}$, we applied, for all $  i\in \{1,\ldots,n\} $, the linear transformation $ \textbf{x}_i = \boldsymbol{\Omega}_h {\textbf{z}}_i $, where the elements of the $\mathbb{R}^{3\times 3}$-matrix \textcolor{black}{$\boldsymbol{\Omega}_h$} were simulated independently, and separately in every iteration, from the univariate uniform distribution $\texttt{unif}(-100,100)$. We then applied the AMUSE unmixing procedure with lag $\tau =1$ to the mixed time series, using the implementation contained in the R-package JADE \citep{Rjade}. The existence of the limiting distribution of the AMUSE unmixing estimator requires finite fourth moments. Note that the ARCH(1) parameters $\alpha_0, \alpha_1$ are chosen such that the fourth moments exist for all components. We denote the absolute values of the AMUSE unmixed time series and the absolute values of the \textcolor{black}{original} centered time series as $| \hat{\textbf{z}} |$ and $| \textbf{z} |$, respectively.

\textcolor{black}{Now}, we have $\max_\ell ({g_{n\ell}}) = n^{-1/5}$, which corresponds to the ARCH(1) process. The $\texttt{D}^{(2)}$ process in the third component has the slowest rate of converence, giving $c_n = n^{2/5 }$, see  \cite{lietzen2019}. Hereby, under our choice of $k_n = \lfloor n^{1/4} \rfloor $, we have that the assumptions required by \textcolor{black}{Theorems \ref{prop:consistency_and_limiting} and \ref{prop:consistency_and_limiting2}} hold and, hence, for large sample sizes, the \textcolor{black}{extreme value} index estimates calculated from $|\hat{\textbf{z}}|$ and $|{\textbf{z}}|$ are expected to be close to each other.

We estimated the \textcolor{black}{extreme value} indices for every component from \textcolor{black}{both} $|\hat{\textbf{z}}|$ and $|{\textbf{z}}|$, using both the Hill estimator and the moment estimator. Note that both estimators produce three \textcolor{black}{extreme value} index estimates, one for each component. \textcolor{black}{To capture the ARCH(1) component, we collected, in every simulation iteration,} the largest of the three estimates, denoted in the following by $ \hat{\gamma}(|\hat{\textbf{z}}|) $ and $ \hat{\gamma}(|{\textbf{z}}|) $ \textcolor{black}{(this induces a slight bias to the results which is, however, rendered negligible with increasing~$ n $)}. The histograms of $ \hat{\gamma}(|\hat{\textbf{z}}|) $ and $ \hat{\gamma}(|{\textbf{z}}|) $ for sample sizes $n=300,10^3,10^4$ are shown in Figure~\ref{fig:simu_3}, where the \textcolor{black}{extreme value} indices estimated from $|\hat{\textbf{z}}|$ correspond to light blue colour, and the \textcolor{black}{extreme value} index estimates calculated from the original $|{\textbf{z}}|$ correspond to light red colour. \textcolor{black}{Dark blue colour is used for the parts of the histograms} that overlap and the dashed yellow vertical line represents the theoretical extreme value index value $\gamma = 1/5$. Values smaller than $-2$ are omitted from the figure\textcolor{black}{;} a total of 21 moment estimator estimates were smaller than~$-2$.

\begin{figure}[htp]
	\includegraphics[width=0.9\textwidth]{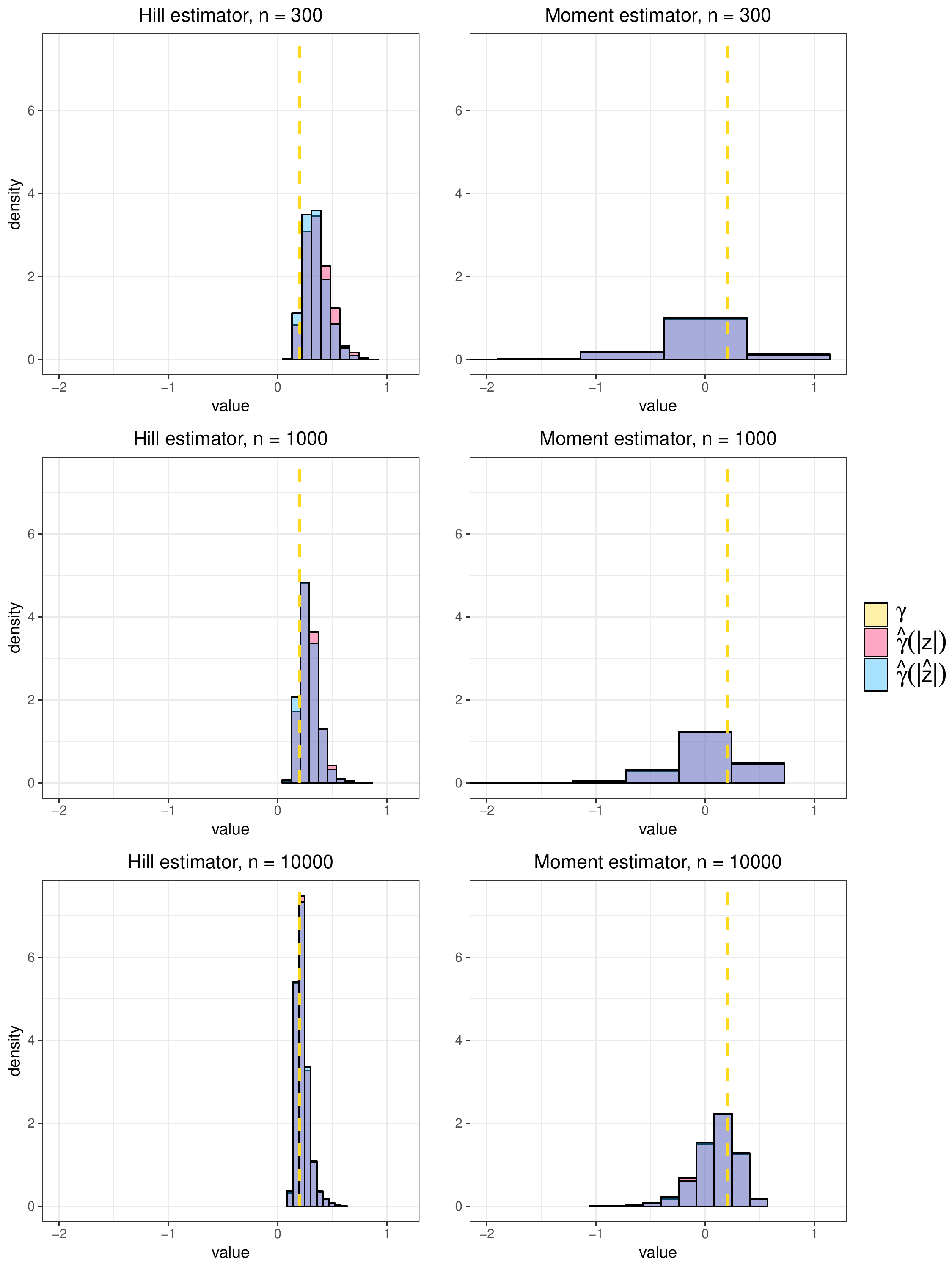}
	\caption{
		Histograms of $ \hat{\gamma}(|{\textbf{z}}|) $ (light red) and $ \hat{\gamma}(|\hat{\textbf{z}}|) $ (light blue) in the simulation study with sample sizes 300, 1000 and 10 000. The dashed yellow vertical line is the theoretical \textcolor{black}{extreme value} index $\gamma = 1/5$. The dark blue color in the histograms represents the area, where the two histograms overlap.}
	\label{fig:simu_3}
\end{figure}

\textcolor{black}{In Figure \ref{fig:simu_3}, already for the small sample size $ n = 300 $, the two histograms overlap significantly. Moreover, starting from $ n = 1000 $, the histograms are basically identical, showing that, as predicted by the theory, the effect of the BSS-step on the estimation of the extreme value indices is almost negligible. When comparing the Hill estimator and the moment estimator, Figure \ref{fig:simu_3} indicates that the variance of the moment estimator is larger, when compared to the Hill estimator. In addition, the bias of the Hill estimator is visible in the histograms, see \cite{de2007extreme}, and seems to decrease as the sample size increases. The histograms corresponding to the sample sizes $10^5,10^6$ and $10^7$ have been omitted here, as they introduce no new information to the simulation study.}

\begin{figure}[htp]
	\includegraphics[width=1\textwidth]{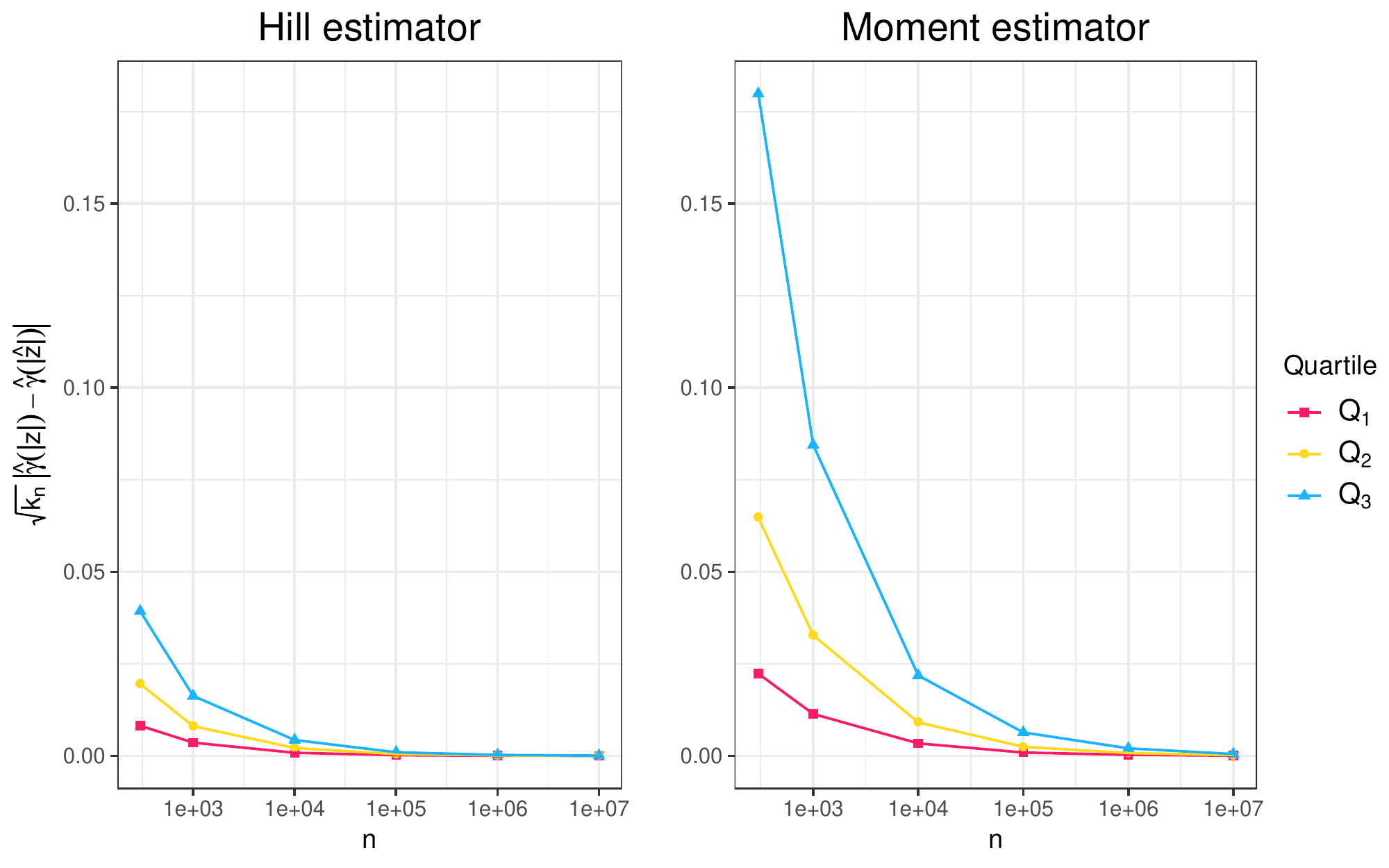}
	\caption{The quartiles of $\sqrt{k_n}|\hat{\gamma}(|\textbf{z}|) -\hat{\gamma}(|\hat{\textbf{z}}|) |$, for the Hill estimator and the  moment estimator in the simulation study.}
	\label{fig:simu_4}
\end{figure}

Figure \ref{fig:simu_4} illustrates the absolute differences, scaled with $\sqrt{k_n}$, between the estimates calculated from $|\textbf{z}|$ and $|\hat{\textbf{z}}|$. The red and blue curves represent the first and third empirical quartiles of the absolute differences, respectively, \textcolor{black}{and the yellow curve is the corresponding sample median curve.} \textcolor{black}{The differences can be seen to converge to zero for both estimators, but the moment estimator requires larger sample sizes for this. That is, the quartile $Q_3$ for the Hill estimator is close to zero already with $ n = 10^5$ and, conversely, the moment estimator quartile $Q_3$ requires samples of size $ n = 10^7 $ for achieving the same magnitude.}

\section{Real data example}\label{sec:real_data}

Heavy-tailed distributions are encountered frequently in the context of financial instruments \citep{rachev2003handbook}. Here, we consider extreme value index estimation for a four-dimensional financial time series downloaded from Yahoo Finance. The data consist of the daily log-returns of the S\&P500 index and the stock prices of CISCO Systems, Intel Corporation and Sprint Corporation in the period of January 3rd, 1991 -- September 12th, 2019. The observations were further standardized to have unit variance, which can be done without loss of generality as both our extreme value index estimators are scale invariant. A subset of the data from a shorter period of time was used already in \cite{fan2008modelling} in the context of multivariate volatility modeling, which inspired us to choose the same data set.

The full four-variate series is visualized in \textcolor{black}{Figure \ref{fig:example_1} in the supplementary Appendix \ref{sec:appendixC}.} Volatility spikes that span most of the series occur around the years 2002 and 2008, caused by the stock market downturn of 2002 and the financial crisis of 2007--2008, respectively. Especially the latter time period stands out also in Figure \ref{fig:example_2}, where we have estimated the extreme value indices of the individual series. The estimation in Figure \ref{fig:example_2} was conducted by moving a window of length 60 days through each univariate series and estimating the extreme value index of each window with the Hill estimator with the tail length $ k_n = k = 16 $. The $ x $-axis values in the plot correspond to the middle days (30th days) of the windows. Contrary to the approach in Section \ref{sec:general_framework}, we estimated the extreme value indices not from the absolute values of the series, but \textcolor{black}{separately for} both the left and  \textcolor{black}{the right tail of each of the series.} That is, for each of the four time series \textcolor{black}{in Figure \ref{fig:example_1}}, we obtain two sequences of extreme value index estimates, always plotted with the same colours in Figure \ref{fig:example_2}. This approach was taken to assess the behaviour of both negative and positive returns separately, \textcolor{black}{in order to perform a more subtle analysis}.  Based on \textcolor{black}{Figures  \ref{fig:example_2} and \ref{fig:example_1}}, it seems reasonable to assume that among the four series there is an underlying latent factor (``financial crisis series'') which contributes risk to all four series around the times of the previous two crises.

\begin{figure}[htp]
	\includegraphics[width=1\textwidth]{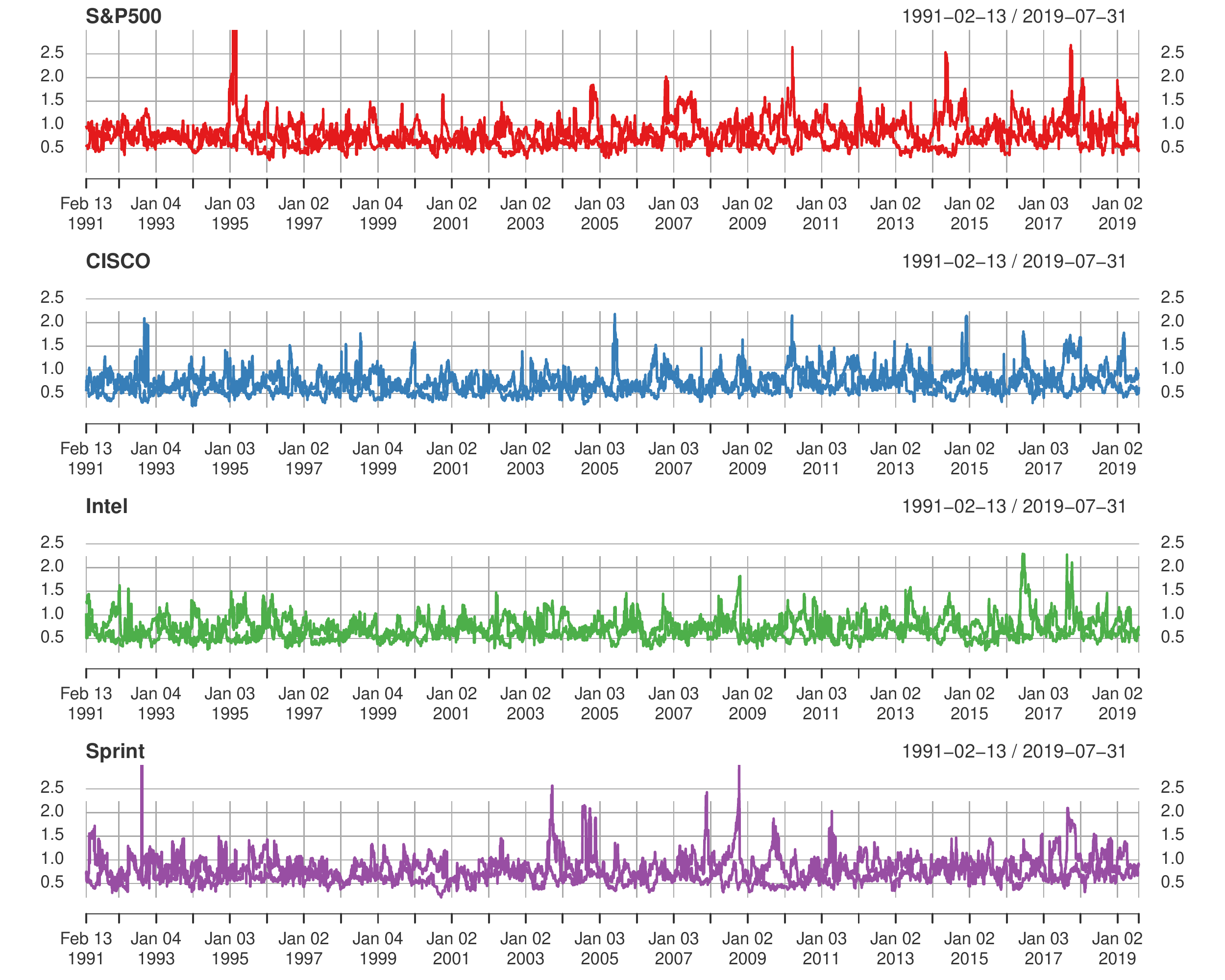}
	\caption{The extreme value indices of the observed series $ \textbf{x}_i $ estimated with a rolling window of length 60 days. The two series in each plot correspond to the extreme value index estimates of the left and right tails of the corresponding series. Hill estimator with the tail length $ k_n = 16 $ was used. The $ x $-axis in the plot denotes the middle (30th) days of the windows.}
	\label{fig:example_2}
\end{figure}

To explore this, we estimate latent factors using generalized SOBI \citep{miettinen2019extracting}, an extension of the SOBI method which uses both serial correlation and volatility information in estimating the latent series. Denoting the original \textcolor{black}{four-variate series at time $ i $ by $ \textbf{x}_i $}, the estimates of the centered latent series are given by $ \hat{\textbf{z}}_i = \hat{\boldsymbol{\Gamma}} (\textbf{x}_i - \bar{\textbf{x}}) $ where $ \hat{\boldsymbol{\Gamma}} \in \mathbb{R}^{4 \times 4} $ is the unmixing matrix estimate given by generalized SOBI. The estimates are shown in Figure \ref{fig:example_3} in the supplementary Appendix \ref{sec:appendixC} and indeed hint that the risk on certain periods is driven by individual latent factors. E.g., the majority of the volatility associated with the 2007-2008 financial crisis has concentrated in the fourth latent series.

To get a clearer view, Figure \ref{fig:example_4} shows the extreme value index estimates of the four latent series, obtained using the same rolling window approach as used in Figure~\ref{fig:example_2}. The most prominent feature in Figure \ref{fig:example_4} is the spike around \textcolor{black}{year 2002} in one of the extreme value indices of the first series, indicating a period of large risk. Several other spikes are also visible, most notably \textcolor{black}{in one of the indices of the fourth \textcolor{black}{latent} series during late 2002. Thus, we \textcolor{black}{infer} that the 2002 crisis was driven by two separate sources of risk.}

\begin{figure}[htp]
	\includegraphics[width=1\textwidth]{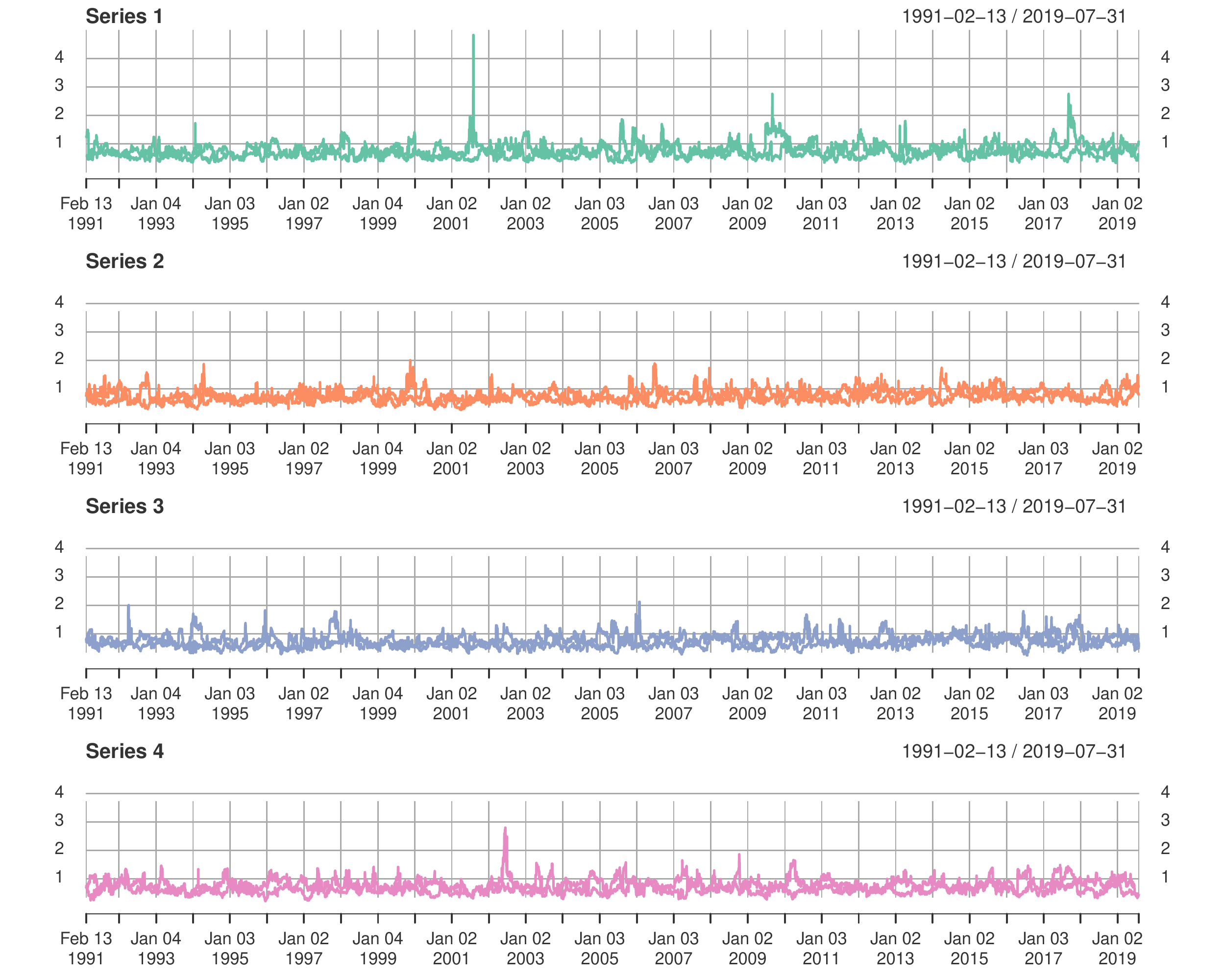}
	\caption{The extreme value indices of the latent series $ \textbf{z}_i $ estimated with a rolling window of length 60 days. The two series in each plot correspond to the extreme value index estimates of the left and right tails of the corresponding series. Hill estimator with the tail length $ k_n = 16 $ was used. The $ x $-axis in the plot denotes the middle (30th) days of the windows.}
	\label{fig:example_4}
\end{figure}

Finally, we study the connection between the factors and the observed series. The inverse transformation from the latent series to the observed ones is $ \hat{\textbf{x}}_i = \hat{\boldsymbol{\Gamma}}^{-1} \textbf{z}_i + \bar{\textbf{x}} $ where
\[
\hat{\boldsymbol{\Gamma}}^{-1} = \begin{pmatrix}
0.54 & 0.16 & 0.20 & 0.80 \\
0.24 & 0.87 & 0.28 & 0.34 \\
0.28 & 0.16 & 0.89 & 0.32 \\
-0.51 & 0.04 & 0.10 & 0.85
\end{pmatrix},
\]
contains the loadings of the \textcolor{black}{latent sources} for each of the observed series. The loadings reveal, for example, that both the first and fourth latent series contribute (absolutely) most to the first and the fourth original time series. \textcolor{black}{More specifically}, the fourth latent process is the most important \textcolor{black}{(loadings 0.80 and 0.85)}, and the first latent process the second most important \textcolor{black}{(loadings 0.54 and -0.51)} in explaining the behavior of the  log-returns of S\&P500 and Sprint. We conclude that, out of the four observed series, the financial crises affected S\&P500 and Sprint the most, and had a significantly smaller impact on CISCO and Intel.

\section{Conclusion}
\label{sec:conc}

We studied the effect of a preliminary latent variable extraction on the estimation of the extreme value indices of the latent independent components. This approach to multivariate extreme value analysis is highly practical in the sense that it reduces the problem into several univariate extreme value problems, allowing the use of the standard extreme value machinery. Moreover, our asymptotic analysis revealed that, under reasonably mild conditions, the consistency and limiting normality of the Hill estimator and the moment estimator are preserved in this construction.

A natural question to pursue in the future is whether the conditions in Theorems \ref{prop:consistency_and_limiting} and \ref{prop:consistency_and_limiting2} can be weakened (we only showed that they are sufficient). Some preliminary simulation (not shown here) indicates that this might indeed be the case. Moreover, the current work can likely be used to simplify the task of deriving similar results for other suitable estimators besides the Hill estimator and the moment estimator. This is because the perturbation bounds for tail observations given in Appendix \ref{subsec:aux-lemmas} are not tied to any particular extreme value index estimator (indeed, they concern the latent variable estimation part of the model). As such, one only needs to derive the analogues of Appendix \ref{subsec:main-proofs} (perturbation bounds for the actual extreme value index estimation step) for the new methods.

%
%
%
%
%

\appendix

\section{Proofs}
\label{sec:proofs}
Section \ref{sec:proofs} of the appendix is devoted to the proofs of the technical results. We have gathered auxiliary technical lemmas into Subsection \ref{subsec:aux-lemmas} and Subsection \ref{subsec:main-proofs} contains to the proofs of our main theorems.
\subsection{Auxiliary lemmas}
\label{subsec:aux-lemmas}
The main objective in this subsection is to establish the rate at which the quantity
\[
| |\hat{\textbf{z}}^k|_{(n-m,n)} / |\textbf{z}^k|_{(n-m,n)} - 1 |
\]
vanishes. We begin with the next result that allows us to consider the component with the heaviest tail as the conservative bound for the error. This translates into $\max_l \{g_{nl}\}$ on our main theorems.

\begin{lemma}\label{lem:interpretation_g}
	Let $ F_0  \in G_{\gamma_0}, F_{1} \in G_{\gamma_1}, F_{2} \in G_{\gamma_2}, F_{3} \in G_{\gamma_3}$ be distributions such that,
	\[
	\gamma_0 > \gamma_1 > \gamma_2 = 0 > \gamma_3.
	\]
	For $k=0,1,2,3$, put $g_{nk} =\max \{ a_{nk}, b_{nk} \}$, where $a_{nk},b_{nk}$ are the normalising sequences such that $\frac{y^k_{(n,n)}-b_{nk}}{a_{nk}} \rightsquigarrow G_\gamma$, where $y^k$ follows $F_k$.
	Then
	\[
	\frac{g_{nk}}{g_{n0}} = o(1), \quad k = 1, 2, 3.
	\]
\end{lemma}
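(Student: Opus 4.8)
The plan is to reduce the statement to the standard descriptions of the normalising sequences $a_{nk},b_{nk}$ in the three domains of attraction (see, e.g., \cite{de2007extreme}, Chapter~1), combined with the elementary fact that a function which is regularly varying with a strictly negative index tends to $0$ at infinity. Since any two admissible choices of $(a_{nk},b_{nk})$ differ only through the relations of the convergence-to-types theorem, the ratio $g_{nk}/g_{n0}$ does not depend on the choice, and it is enough to argue with the canonical normalising sequences. Note also that $g_{nk}\ge a_{nk}>0$, so all quantities below are positive.

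First I would record the facts needed in each domain. For $\gamma_0>0$ one may take $b_{n0}=0$ and $a_{n0}=U_0(n)$, where $U_0=(1/(1-F_0))^{\leftarrow}$ is regularly varying with index $\gamma_0$; hence $g_{n0}=U_0(n)$ is regularly varying with index $\gamma_0$, so $g_{n0}\to\infty$, and by a Potter bound $g_{n0}\ge c\,n^{\gamma_0/2}$ for all large $n$. The same description applies for $\gamma_1\in(0,\gamma_0)$, giving $g_{n1}=U_1(n)$, regularly varying with index $\gamma_1$. For $\gamma_2=0$ one may take $b_{n2}=U_2(n)$ together with a slowly varying scaling sequence $a_{n2}$: if the right endpoint of $F_2$ is $+\infty$ then $U_2$ is slowly varying (equivalently $U_2(tx)/U_2(t)\to 1$) and $a_{n2}=o(b_{n2})$, while if the endpoint is finite then $b_{n2}$ is bounded and $a_{n2}\to 0$; in either case $g_{n2}=\max\{a_{n2},b_{n2}\}$ is slowly varying or bounded, and in particular $g_{n2}=o(n^{\gamma_0/2})$. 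Finally, for $\gamma_3<0$ the right endpoint $x_3^\ast$ of $F_3$ is finite; taking $b_{n3}=x_3^\ast$ and $a_{n3}=x_3^\ast-U_3(n)$, which is regularly varying with index $\gamma_3<0$ and hence tends to $0$, we get that $g_{n3}=\max\{a_{n3},x_3^\ast\}$ is bounded.

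With these in hand the three limits follow immediately. For $k=1$, the quotient $g_{n1}/g_{n0}=U_1(n)/U_0(n)$ is regularly varying with index $\gamma_1-\gamma_0<0$, hence $\to 0$. For $k=2$, $g_{n2}=o(n^{\gamma_0/2})$ while $g_{n0}\ge c\,n^{\gamma_0/2}$ for large $n$, so $g_{n2}/g_{n0}\to 0$. For $k=3$, $g_{n3}$ is bounded while $g_{n0}\to\infty$, so $g_{n3}/g_{n0}\to 0$. This proves the claim.

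I expect the only delicate point --- the main obstacle --- to be the $\gamma=0$ case, where the normalising sequences are not a single explicit regularly varying function: one must recall that in the Gumbel domain $U(n)$ is slowly varying when the upper endpoint is infinite and bounded otherwise, and that the auxiliary scaling sequence is slowly varying, so that $g_{n2}$ grows more slowly than any positive power of $n$ and is therefore dominated by the regularly varying sequence $g_{n0}$ of positive index. The remaining cases are routine manipulations with regularly varying functions, and the strict inequality $\gamma_1<\gamma_0$ (rather than $\gamma_1\le\gamma_0$) is used only in the $k=1$ step.
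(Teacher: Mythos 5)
Your proof is correct and follows essentially the same route as the paper's: a case-by-case comparison of the canonical normalising sequences in the Fr\'echet, Gumbel and Weibull domains, with the Fr\'echet and Weibull cases handled via regular variation and boundedness exactly as in the paper. The only real difference is the Gumbel case with infinite endpoint, where the paper rules out polynomial growth of $F_2^{\leftarrow}(1-1/n)$ by a contradiction argument (such growth would force $F_2$ to be heavy tailed), whereas you invoke the standard $\Pi$-variation facts that $U_2$ is slowly varying and the auxiliary function is $o(U_2)$, together with a Potter-type lower bound on $g_{n0}$ --- both justifications are standard and valid.
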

\begin{proof}
	Note first that since $ \gamma_0 > 0 $, the distribution $ F_{0} $ is heavy tailed and belongs to the domain of attraction of the Fr{\'e}chet distribution. Thus, by \cite[Section 3.4]{embrechts2013modelling}, $ g_{n0} = a_{n0} = n^{\gamma_0} L_0(n)$ where $ L_0 $ is a slowly varying function. Similarly $ g_{n1} = n^{\gamma_1} L_1(n)$ for some slowly varying function $ L_1 $ and we have the claim for the value $ k = 1 $, that is,
	\[
	\frac{g_{n1}}{g_{n0}} = n^{\gamma_1 - \gamma_0} \frac{L_1(n)}{L_0(n)} = o(1).
	\]
	Similarly, the distribution $ F_{3} $ is light tailed and belongs to the domain of attraction of the Weibull distribution. As such, by \cite[Section 3.4]{embrechts2013modelling}, we have $ g_{n3} = \max \{ n^{\gamma_3} L_3(n), d \} $ for some slowly-varying function $ L_3 $ and constant $ d $. Since $ \gamma_3 < 0 $, we have $ g_{n3} = \mathcal{O}(1)$ and the claim for $ k = 3 $ follows from
	\[
	\frac{g_{n3}}{g_{n0}} = \frac{\mathcal{O}(1)}{n^{\gamma_0} L_0(n)} = o(1).
	\]
	It remains to prove the case $k=2$ that corresponds to the border case $\gamma_2=0$. Now $F_2$
	belongs to the domain of attraction of the Gumbel distribution and, by \cite[Section 3.4]{embrechts2013modelling}, we have $ g_{n2} = \max \{ a(b_n), b_n \}$, where $ a(b_n) $ is as in \cite[Definition 3.3.18]{embrechts2013modelling}, $ b_n = F^\leftarrow_2(1 - 1/n) $ and $ F^\leftarrow_2 $ is the quantile function. Let $ y_F \leq \infty $ be the right endpoint of the distribution $F_2$. We consider two cases, $ y_F < \infty $ and $ y_F = \infty $, separately. In the former, $ b_n \rightarrow y_F $ as $ n \rightarrow \infty $ and by \cite[Remark 2, Section 3.3]{embrechts2013modelling} $ a(b_n) \rightarrow 0 $ as $ n \rightarrow \infty $. Thus, for a large enough $ n $, we have $ g_{n2} = b_n \rightarrow y_F < \infty $ and
	\[
	\frac{g_{n2}}{g_{n0}} = \frac{y_F + o(1)}{n^{\gamma_0} L_0(n)} = o(1).
	\]
	For $ y_F = \infty $, we have $ b_n \rightarrow \infty $ and, by \cite[Remark 1, Section 3.3]{embrechts2013modelling}, $ a(b_n) = o(b_n) $. Thus, for a large enough $ n $, we have $ g_{n2} = b_n $ and
	\[
	\frac{g_{n2}}{g_{n0}} = \frac{F^\leftarrow_2(1 - 1/n)}{n^{\gamma_0} L_0(n)}.
	\]
	We continue by proof by contradiction, and assume that $\frac{F^\leftarrow_2(1 - 1/n)}{n^{\gamma_0} L_0(n)}$ does not converge to zero. Then there exists $\epsilon_0>0$ such that we can find an arbitrarily large $n$ such that
	$$
	F_2^\leftarrow(1 - 1/n) \geq \epsilon_0n^{\gamma_0} L_0(n).
	$$
	It follows that
	$$
	1-F_2\left(\epsilon_0n^{\gamma_0} L_0(n)\right) \geq \frac{1}{n}	
	$$
	and since $L_0$ is slowly varying, this further implies that
	$$
	1-F_2\left(cn^{\gamma_0} \right) \geq \frac{1}{n}
	$$
	for some constant $c>0$ and a large enough $n$. Since $\gamma_0>0$, this implies that $F_2$ is heavy tailed giving us the contradiction. This completes the proof for the case $k=2$ as well.
\end{proof}
The next result shows that the denominator in $ | |\hat{\textbf{z}}^k|_{(n-m,n)} / |\textbf{z}^k|_{(n-m,n)} - 1 | $ is negligible.

\begin{lemma}\label{lem:non_vanishing_ios}
	Let $(z_k), k=1,\ldots,n$ be an arbitrary sequence of non-negative random variables such that
	\begin{equation}
		\label{eq:uniform-zero-probability}
		\lim_{\delta \to 0} \inf_{k\geq 1}\mathbb{P}\left(z_k\geq \delta\right) = 1.
	\end{equation}
	Then, for any $\epsilon>0$ and any intermediate sequence $k_n$, there exists $\delta>0$ and $N$ such that
	$$
	\mathbb{P}(z_{(n-k_n,n)} < \delta) <\epsilon, \quad n\geq N.
	$$
\end{lemma}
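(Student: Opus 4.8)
The plan is to prove this by a direct $\epsilon/2$ argument, splitting the event $\{z_{(n-k_n,n)} < \delta\}$ into two pieces: the event that ``many'' of the $z_k$ are small, and the complementary event that has small probability by a counting/union-bound argument. The key observation is that the order statistic $z_{(n-k_n,n)}$ is the $(k_n+1)$-th largest value, so the event $\{z_{(n-k_n,n)} < \delta\}$ is exactly the event that \emph{at least} $n - k_n$ of the values $z_1,\dots,z_n$ are strictly less than $\delta$; equivalently, \emph{fewer than} $k_n + 1$ of them are $\geq \delta$.

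First I would fix $\epsilon > 0$ and use hypothesis \eqref{eq:uniform-zero-probability} to choose $\delta > 0$ such that $\inf_{k\geq 1}\mathbb{P}(z_k \geq \delta) \geq 1 - \tfrac{\epsilon}{4}$, say; more precisely, I want $\sup_{k\geq 1}\mathbb{P}(z_k < \delta) = : q$ to be as small as we need, and since the supremum can be made arbitrarily small this is no obstruction. Let $S_n = \sum_{k=1}^n \mathbf{1}\{z_k < \delta\}$ count the number of ``small'' values. Then $\{z_{(n-k_n,n)} < \delta\} = \{S_n \geq n - k_n\}$. Taking expectations, $\mathbb{E}[S_n] = \sum_{k=1}^n \mathbb{P}(z_k < \delta) \leq n q$. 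By Markov's inequality,
\[
\mathbb{P}(S_n \geq n - k_n) \leq \frac{\mathbb{E}[S_n]}{n - k_n} \leq \frac{n q}{n - k_n} = \frac{q}{1 - k_n/n}.
\]
Since $k_n$ is an intermediate sequence, $k_n/n \to 0$, so for $n$ large enough the denominator exceeds $\tfrac12$, giving the bound $2q$. Choosing $\delta$ at the outset so that $q < \epsilon/2$ then yields $\mathbb{P}(z_{(n-k_n,n)} < \delta) < \epsilon$ for all $n \geq N$, where $N$ is chosen so that $k_n/n < \tfrac12$ for $n \geq N$.

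I do not expect any serious obstacle here; the only subtlety is that $q = \sup_{k}\mathbb{P}(z_k < \delta)$ is a supremum over infinitely many indices, but this is precisely why the uniform form of the no-point-mass assumption \eqref{eq:uniform-zero-probability} (rather than a pointwise statement for each fixed $k$) is needed, and it directly delivers $q \to 0$ as $\delta \to 0$. One should be slightly careful with the inequality $\{z_{(n-k_n,n)} < \delta\} \subseteq \{S_n \geq n - k_n\}$ versus equality, depending on the convention for ordering ties, but the inclusion is all that is used and holds under any convention. An alternative to Markov's inequality would be a crude union bound over the $\binom{n}{n-k_n}$ subsets of indices, but that is wasteful; the Markov estimate on $S_n$ is cleaner and suffices.
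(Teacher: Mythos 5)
Your proof is correct, and it takes a genuinely more elementary route than the paper's. Both arguments share the same combinatorial step — translating the order-statistic event into a statement about how many of the $z_k$ lie above or below the threshold $\delta$, and exploiting the uniformity in \eqref{eq:uniform-zero-probability} to control that count uniformly in $n$ — but the key inequality differs. The paper works with $S_n(\delta)=\sum_{k=1}^n \mathbf{1}_{z_k\ge\delta}$ and establishes the lower bound $\mathbb{P}(S_n(\delta)\ge k_n)>1-\epsilon$ via the Paley--Zygmund inequality, using $\overline{S}_n(\delta)\le 1$ to control the second-moment ratio. You instead count the small values, $S_n=\sum_{k=1}^n \mathbf{1}\{z_k<\delta\}$, note the inclusion $\{z_{(n-k_n,n)}<\delta\}\subseteq\{S_n\ge n-k_n\}$, and apply Markov's inequality to get the explicit bound $q/(1-k_n/n)$ with $q=\sup_{k\ge 1}\mathbb{P}(z_k<\delta)$, which the uniform hypothesis makes small as $\delta\to 0$. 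Your first-moment argument is shorter, avoids any second-moment bookkeeping, and makes transparent that only $k_n/n\to 0$ (not $k_n\to\infty$) is used; what the paper's Paley--Zygmund route buys is essentially the same conclusion by bounding the complementary event directly, at the cost of a slightly heavier tool. Your care about strict versus non-strict inequalities and ties is appropriate (the inclusion you use holds under any tie-breaking convention), and your observation that the supremum over infinitely many indices is exactly where the uniform form of \eqref{eq:uniform-zero-probability} enters matches the role it plays in the paper's proof.
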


\begin{proof}
	Let
	$$
	S_n(\delta) = \sum_{k=1}^n \textbf{1}_{z_k \geq \delta}.
	$$
	Then
	$$
	\mathbb{P}(z_{(n-k_n,n)} < \delta) = \mathbb{P}(S_n(\delta) < k_n).
	$$
	Indeed, $S_n(\delta) < k_n$ means that less than $k_n$ of the values are above or equal to $\delta$, which implies
	that $k_n$:th maximum of $z$ is strictly less than $\delta$. Vice versa, if $z_{(n-k_n,n)} < \delta$, then at most $k_n-1$ of values $z_k$ can be above or equal to $\delta$.
	Thus it suffices to prove that for any $\epsilon>0$, we can find $N$ and $\delta$ such that for $n\geq N$ we have
	\begin{equation*}
		\mathbb{P}(S_n(\delta) < k_n) <\epsilon.
	\end{equation*}
	Equivalently, we need to show
	\begin{equation}
		\label{eq:needed}
		\mathbb{P}(S_n(\delta) \geq k_n) > 1-\epsilon.
	\end{equation}
	Denote $\overline{S}_n(\delta) = \frac{S_n(\delta)}{n}$. By \eqref{eq:uniform-zero-probability}, for any $\widetilde{\epsilon}>0$ we can find $\delta>0$ small enough such that
	\begin{equation}
		\label{eq:needed-lower-expectation}
		\mathbb{E} \overline{S}_n(\delta)  = \frac{1}{n}\sum_{k=1}^n \mathbb{P}(z_k\geq \delta) > 1- \widetilde{\epsilon}
	\end{equation}
	uniformly in $n$.
	Together with $\overline{S}_n(\delta)\leq 1$ this gives us
	$$
	\frac{\left[\mathbb{E} \overline{S}_n(\delta)\right]^2}{\mathbb{E} \overline{S}^2_n(\delta)} \geq (1-\widetilde{\epsilon})^2
	$$
	which holds for every $n$. Next we recall the Paley-Zygmund inequality which states that, for any random variable $Z\geq 0$ with finite variance and any number $\theta\in[0,1]$, we have
	\begin{equation}
		\label{eq:paley-zygmund}
		\mathbb{P}\left(Z\geq \theta \mathbb{E} Z\right) \geq (1-\theta)^2 \frac{(\mathbb{E} Z)^2}{\mathbb{E} Z^2}.
	\end{equation}
	Since $k_n$ is an intermediate sequence, we have, applying \eqref{eq:needed-lower-expectation}, that
	$$
	\frac{k_n}{n\mathbb{E} \overline{S}_n(\delta)} \leq \frac{k_n}{n(1-\widetilde{\epsilon})} \leq 1
	$$
	provided that $n$ is large enough. Hence we may apply \eqref{eq:paley-zygmund} with $Z=\overline{S}_n(\delta)$ and $\theta = \frac{k_n}{n\mathbb{E} \overline{S}_n(\delta)}$ to compute
	\begin{equation*}
		\begin{split}
			\mathbb{P}(S_n(\delta) \geq k_n)
			&= 	\mathbb{P}\left(\overline{S}_n(\delta) \geq \frac{k_n}{n \mathbb{E} \overline{S}_n(\delta)}\mathbb{E} \overline{S}_n(\delta)\right)	\\
			&\geq \left(1- \frac{k_n}{n \mathbb{E} \overline{S}_n(\delta)}\right)^2 \frac{\left[\mathbb{E} \overline{S}_n(\delta)\right]^2}{\mathbb{E} \overline{S}^2_n(\delta)}\\
			&\geq \left(1- \frac{k_n}{n (1-\widetilde{\epsilon})}\right)^2 (1-\widetilde{\epsilon})^2.
		\end{split}
	\end{equation*}
	This implies \eqref{eq:needed}, since $\widetilde{\epsilon}>0$ can be chosen arbitrarily and independently of $n$. This concludes the proof.
\end{proof}

The next two results allow us to deduce bounds for the difference between order statistics of $|\hat{\textbf{z}}^k|$ and $|\textbf{z}^k|$.
\begin{lemma}\label{lem:monotone_order}
	Let $\textbf{a} = ( a_1, \ldots , a_n)  $ and $\textbf{b} = ( b_1, \ldots , b_n) $ satisfy $ a_i \leq b_i $, for all $ i = 1, \ldots , n $. Then
	\[
	(\textbf{a})_{(k, n)} \leq (\textbf{b})_{(k, n)},
	\]
	for all $ k = 1, \ldots , n $.
\end{lemma}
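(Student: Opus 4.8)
The statement to prove is Lemma~\ref{lem:monotone_order}: if $a_i \leq b_i$ for all $i$, then the $k$th order statistics satisfy $(\textbf{a})_{(k,n)} \leq (\textbf{b})_{(k,n)}$ for all $k$.

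My plan is to prove this directly from the characterization of the $k$th smallest order statistic via a min-max (or a counting) formula. The cleanest route is to use the identity
\[
(\textbf{a})_{(k,n)} = \min_{\substack{S \subseteq \{1,\ldots,n\} \\ |S| = n-k+1}} \ \max_{i \in S} a_i,
\]
which expresses the $k$th smallest value of the vector $\textbf{a}$ as a minimum over all subsets of size $n-k+1$ of the maximum within that subset. (One should double-check the indexing convention in the paper: here $(\textbf{a})_{(k,n)}$ denotes the $k$th largest, so the corresponding formula is $(\textbf{a})_{(k,n)} = \max_{|S| = k} \min_{i \in S} a_i$; the argument is symmetric either way.) First I would state and briefly justify this combinatorial identity for a single vector. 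Then, for any fixed subset $S$, the pointwise inequality $a_i \le b_i$ gives $\min_{i \in S} a_i \le \min_{i \in S} b_i$; taking the maximum over all subsets $S$ of size $k$ on both sides preserves the inequality, yielding $(\textbf{a})_{(k,n)} \le (\textbf{b})_{(k,n)}$.

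An alternative, perhaps even more elementary, approach avoids the min-max identity entirely and argues by counting: $(\textbf{b})_{(k,n)}$ is the unique value $t$ such that at least $k$ of the $b_i$ are $\geq t$ and at most $k-1$ of the $b_i$ are $> t$. Fix $t = (\textbf{b})_{(k,n)}$; then $|\{i : b_i \geq t\}| \geq k$, and since $a_i \leq b_i$ we have $\{i : a_i \geq t\} \subseteq \{i: b_i \geq t\}$... wait, that inclusion goes the wrong way, so I would instead directly note that at least $k$ indices satisfy $b_i \geq (\textbf{b})_{(k,n)}$; for those same indices $a_i \leq b_i$ need not help. The counting argument actually works more naturally by looking at it from the top: $(\textbf{a})_{(k,n)}$ is the largest value achieved by the $k$ largest entries of $\textbf{a}$, and each of those $k$ entries is bounded by the corresponding entry of $\textbf{b}$, which in turn is at most $(\textbf{b})_{(k,n)}$ only if that index is among the top $k$ of $\textbf{b}$ — which is not guaranteed. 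So the min-max identity really is the clean way, and I would favor it.

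I do not expect a serious obstacle here — this is a standard monotonicity property of order statistics and the proof is short. The only point requiring a modicum of care is getting the min-max identity and its indexing convention exactly right (largest vs.\ smallest, subset size $k$ vs.\ $n-k+1$) so that the direction of the inequality comes out correctly; once that is pinned down, the argument is a one-line application of monotonicity of $\min$ and $\max$ under pointwise domination. I would write it as: fix $k$; for any index set $S$ with $|S| = k$, pointwise domination gives $\min_{i\in S} a_i \le \min_{i \in S} b_i \le (\textbf{b})_{(k,n)}$; taking the supremum over such $S$ on the left gives $(\textbf{a})_{(k,n)} \le (\textbf{b})_{(k,n)}$, as desired.
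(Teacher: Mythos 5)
Your argument is correct, but it takes a genuinely different route from the paper. You prove the monotonicity directly from the combinatorial min--max representation of order statistics, $(\textbf{a})_{(k,n)} = \max_{|S|=k}\min_{i\in S} a_i$ (under your ``$k$th largest'' reading), so that pointwise domination $a_i\leq b_i$ immediately transfers through $\min$ and $\max$; this is elementary and self-contained, and your final one-line chain $\min_{i\in S} a_i \leq \min_{i\in S} b_i \leq (\textbf{b})_{(k,n)}$ followed by taking the maximum over $S$ is sound. Two small remarks: first, the paper's convention is $(\textbf{y})_{(n,n)}\geq\dots\geq(\textbf{y})_{(1,n)}$, so $(\textbf{a})_{(k,n)}$ is the $k$th \emph{smallest} entry, not the $k$th largest as you guessed; as you note, the lemma and your argument are insensitive to this (just use $(\textbf{a})_{(k,n)}=\max_{|S|=n-k+1}\min_{i\in S}a_i$), but the convention should be fixed correctly in the write-up. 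Second, the counting argument you abandoned does in fact work if you threshold at $t=(\textbf{a})_{(k,n)}$ rather than at $(\textbf{b})_{(k,n)}$: since $a_i\geq t$ implies $b_i\geq t$, at least as many $b_i$ exceed $t$ as $a_i$ do, which forces the corresponding order statistic of $\textbf{b}$ to be at least $t$. The paper instead deduces the lemma from Weyl's eigenvalue inequality applied to diagonal matrices, first establishing the displacement bound \eqref{eq:weyl_order_statistics}, $(\textbf{r}+\textbf{s})_{(k,n)}\leq(\textbf{r})_{(k,n)}+(\textbf{s})_{(n,n)}$, and then taking $\textbf{s}=\textbf{a}-\textbf{b}\leq 0$; that route looks heavier for this particular statement, but it has the side benefit that \eqref{eq:weyl_order_statistics} is exactly the tool reused in Lemma \ref{lem:weyl}, whereas your approach would require a separate argument there.
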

\begin{proof}
	Recall Weyl's inequality: if $ \textbf{R}, \textbf{S} \in \mathbb{R}^{n \times n}$ are symmetric matrices and $ \lambda_j(\textbf{R}) $ denotes the $ j $th largest eigenvalue of the matrix $ \textbf{R} $, $ j = 1 , \ldots , n $, then
	\[
	\lambda_{j + m}(\textbf{R}) + \lambda_{k - m}(\textbf{S}) \leq \lambda_{j}(\textbf{R} + \textbf{S}) \leq \lambda_{j - \ell}(\textbf{R}) + \lambda_{1 + \ell}(\textbf{S}).
	\]
	for all $\ell = 0, \ldots , j - 1, m = 0, \ldots k - j$, see \cite{horn1990matrix}.
	
	Let $ \mathrm{diag} (\textbf{r}) \in \mathbb{R}^{n \times n}$ denote the diagonal matrix having the elements of the vector $ \textbf{r} = (r_1, \ldots , r_n)$ as its diagonal elements. Then $ (\textbf{r})_{(k, n)} = \lambda_{n - k + 1}[\mathrm{diag}(\textbf{r})] $ and the right-hand side of Weyl's inequality with $ j = n - k + 1 $ and $ \ell = 0 $ gives,
	\begin{align}\label{eq:weyl_order_statistics}
		(\textbf{r} + \textbf{s})_{(k,n)} \leq (\textbf{r})_{(k,n)} + (\textbf{s})_{(n,n)},
	\end{align}
	for any two vectors $ \textbf{r} = (r_1, \ldots , r_n) $ and $ \textbf{s} = (s_1, \ldots , s_n) $.
	
	Apply next \eqref{eq:weyl_order_statistics} to $ \textbf{r} = \textbf{b}$ and $ \textbf{s} = \textbf{a} - \textbf{b} $ to obtain the claim,
	\[
	(\textbf{a})_{(k,n)} \leq (\textbf{b})_{(k,n)} + (\textbf{a} - \textbf{b})_{(n,n)} \leq (\textbf{b})_{(k,n)} ,
	\]
	where the second inequality holds as all elements of the sequence $ \textbf{a} - \textbf{b} $ are non-positive.
\end{proof}

\begin{lemma}\label{lem:weyl}
	Let $\textbf{x} = ( x_1, \ldots , x_n) $ and $\boldsymbol{\epsilon} = ( \epsilon_1, \ldots , \epsilon_n) $ be arbitrary. Then for all $ k = 1, \ldots , n $,
	\[
	\left| |\textbf{x} + \boldsymbol{\epsilon} |_{(k, n)} - |\textbf{x}|_{(k, n)} \right| \leq | \boldsymbol{\epsilon} |_{(n, n)},
	\]
	where for a vector $ \textbf{a} = (a_1, \ldots , a_n) $ the notation $ | \textbf{a} | \in \mathbb{R}^n$ refers to the vector of the element-wise absolute values of $ \textbf{a} $.
\end{lemma}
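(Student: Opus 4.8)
The plan is to reduce the statement to the monotonicity property of order statistics established in Lemma \ref{lem:monotone_order}, combined with the shift bound \eqref{eq:weyl_order_statistics} that was derived from Weyl's inequality. The key elementary observation is that the single number $|\boldsymbol{\epsilon}|_{(n,n)}$ simultaneously bounds every $|\epsilon_i|$, which lets us turn the entrywise perturbation into a uniform additive shift of the whole vector.

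First I would record the componentwise estimate
\[
|x_i + \epsilon_i| \le |x_i| + |\epsilon_i| \le |x_i| + |\boldsymbol{\epsilon}|_{(n,n)}, \qquad i = 1,\ldots,n,
\]
which follows from the triangle inequality together with the definition of $|\boldsymbol{\epsilon}|_{(n,n)}$ as the largest among the $|\epsilon_i|$. Hence the vector $|\textbf{x}+\boldsymbol{\epsilon}|$ is dominated, entry by entry, by $|\textbf{x}| + |\boldsymbol{\epsilon}|_{(n,n)} \textbf{1}_n$, where $\textbf{1}_n=(1,\ldots,1)$. Applying Lemma \ref{lem:monotone_order} and using that adding a constant to every coordinate shifts all order statistics by exactly that constant, we get
\[
|\textbf{x}+\boldsymbol{\epsilon}|_{(k,n)} \le \bigl(|\textbf{x}| + |\boldsymbol{\epsilon}|_{(n,n)} \textbf{1}_n\bigr)_{(k,n)} = |\textbf{x}|_{(k,n)} + |\boldsymbol{\epsilon}|_{(n,n)}
\]
for every $k$. (Equivalently, one can invoke \eqref{eq:weyl_order_statistics} with $\textbf{r}=|\textbf{x}|$, $\textbf{s}=|\boldsymbol{\epsilon}|$ after passing through Lemma \ref{lem:monotone_order} applied to the componentwise sum $|\textbf{x}|+|\boldsymbol{\epsilon}|$.) This already gives one of the two required inequalities, namely $|\textbf{x}+\boldsymbol{\epsilon}|_{(k,n)} - |\textbf{x}|_{(k,n)} \le |\boldsymbol{\epsilon}|_{(n,n)}$.

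For the reverse inequality I would exploit the symmetry of the claim: writing $\textbf{x} = (\textbf{x}+\boldsymbol{\epsilon}) + (-\boldsymbol{\epsilon})$ and running the same argument with $\textbf{x}+\boldsymbol{\epsilon}$ playing the role of $\textbf{x}$ and $-\boldsymbol{\epsilon}$ playing the role of $\boldsymbol{\epsilon}$, and noting $|-\boldsymbol{\epsilon}|_{(n,n)} = |\boldsymbol{\epsilon}|_{(n,n)}$, yields $|\textbf{x}|_{(k,n)} - |\textbf{x}+\boldsymbol{\epsilon}|_{(k,n)} \le |\boldsymbol{\epsilon}|_{(n,n)}$. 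Combining the two one-sided bounds gives the two-sided estimate in the statement. There is no genuine obstacle here; the only point requiring a little care is the passage from the entrywise domination to the domination of order statistics, which is precisely the content of Lemma \ref{lem:monotone_order}, and the observation that $|\boldsymbol{\epsilon}|_{(n,n)}$ provides a uniform bound on all entries of $\boldsymbol{\epsilon}$ so that the perturbation can be absorbed into a single constant shift.
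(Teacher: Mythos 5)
Your proof is correct and follows essentially the same route as the paper: triangle inequality, monotonicity of order statistics via Lemma \ref{lem:monotone_order}, and the symmetric decomposition $\textbf{x} = (\textbf{x}+\boldsymbol{\epsilon}) - \boldsymbol{\epsilon}$ for the reverse bound. The only cosmetic difference is that where the paper invokes the Weyl-type bound \eqref{eq:weyl_order_statistics}, you bound each $|\epsilon_i|$ by $|\boldsymbol{\epsilon}|_{(n,n)}$ and absorb the perturbation into a constant shift of all coordinates, which is an elementary re-derivation of that same bound for nonnegative perturbations.
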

\begin{proof}

	Equation \eqref{eq:weyl_order_statistics} with $ \textbf{r} = |\textbf{x}| $ and $ \textbf{s} = |\boldsymbol{\epsilon}| $ in conjunction with the triangle inequality, $ |x_i + \epsilon_i| \leq |x_i| + |\epsilon_i|  $, and Lemma \ref{lem:monotone_order} allow us to estimate,
	\[
	|\textbf{x} + \boldsymbol{\epsilon}|_{(k, n)} - |\textbf{x}|_{(k, n)} \leq (|\textbf{x}| + |\boldsymbol{\epsilon}|)_{(k, n)} - |\textbf{x}|_{(k, n)} \leq  | \boldsymbol{\epsilon} |_{(n, n)},
	\]
	giving the first half of the inequality. For the other half, we have by the same set of inequalities and the expansion $ \textbf{x} = \textbf{x} + \boldsymbol{\epsilon} - \boldsymbol{\epsilon} $,
	\[
	|\textbf{x}|_{(k, n)} - |\textbf{x} + \boldsymbol{\epsilon}|_{(k, n)} \leq (|\textbf{x} + \boldsymbol{\epsilon} | + | \boldsymbol{\epsilon} | )_{(k, n)} - |\textbf{x} + \boldsymbol{\epsilon}|_{(k, n)}  \leq  | \boldsymbol{\epsilon} |_{(n, n)},
	\]
	where the second inequality is obtained by applying \eqref{eq:weyl_order_statistics} to $ \textbf{r} = |\textbf{x} + \boldsymbol{\epsilon} | $ and $ \textbf{s} = | \boldsymbol{\epsilon} |$.
	
\end{proof}

Combining previous results yields the following lemma that provides a crucial estimate for the proofs of our main theorems.

\begin{lemma}\label{lem:maximum}
	Let $k=1,\ldots,p$ be fixed.	Then, under \eqref{eq:general_model} and Assumption \ref{assu:component_rates}, we have
	\[ \max_{0 \leq m \leq k_n} \left| \frac{|\hat{\textbf{z}}^{k}|_{(n - m,n)}}{|\textbf{z}^{k}|_{(n - m,n)}}  - 1 \right| =  \mathcal{O}_p \left( \frac{ 1 }{c_{n}} \max_\ell \{ g_{n\ell} \} \right).  \]
\end{lemma}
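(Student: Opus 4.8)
The plan is to bound the ratio $|\hat{\textbf{z}}^k|_{(n-m,n)}/|\textbf{z}^k|_{(n-m,n)} - 1$ uniformly over $0 \leq m \leq k_n$ by writing it as the quotient of the numerator difference $|\hat{\textbf{z}}^k|_{(n-m,n)} - |\textbf{z}^k|_{(n-m,n)}$ over the denominator $|\textbf{z}^k|_{(n-m,n)}$, and then controlling each piece separately. From \eqref{eq:general_model} we have $\hat{z}_i^k = z_i^k + (\hat{\textbf{H}}\textbf{z}_i)^k + \hat{r}^k$, so the perturbation vector $\boldsymbol{\epsilon}$ applied to the $k$th component has entries $\epsilon_i = (\hat{\textbf{H}}\textbf{z}_i)^k + \hat{r}^k$. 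By Lemma~\ref{lem:weyl}, for every $m$,
\[
\left| |\hat{\textbf{z}}^k|_{(n-m,n)} - |\textbf{z}^k|_{(n-m,n)} \right| \leq |\boldsymbol{\epsilon}|_{(n,n)} = \max_{1\leq i \leq n}\left| (\hat{\textbf{H}}\textbf{z}_i)^k + \hat{r}^k \right|,
\]
which is a bound that does not depend on $m$ at all, so it immediately handles the supremum over $0 \leq m \leq k_n$ in the numerator.

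Next I would show that $\max_i |(\hat{\textbf{H}}\textbf{z}_i)^k + \hat{r}^k| = \mathcal{O}_p(c_n^{-1}\max_\ell\{g_{n\ell}\})$. Expanding $(\hat{\textbf{H}}\textbf{z}_i)^k = \sum_{\ell} \hat{H}_{k\ell} z_i^\ell$, using $\hat{\textbf{H}} = \mathcal{O}_p(c_n^{-1})$ and $\hat{\textbf{r}} = \mathcal{O}_p(c_n^{-1})$ entrywise, and bounding $\max_i |z_i^\ell| = |\textbf{z}^\ell|_{(n,n)}$, Assumption~\ref{assu:component_rates} gives $|\textbf{z}^\ell|_{(n,n)} = b_{n\ell} + a_{n\ell}\mathcal{O}_p(1) = \mathcal{O}_p(g_{n\ell})$, since $g_{n\ell} = \max\{a_{n\ell}, b_{n\ell}\}$. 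Summing over the finitely many components $\ell = 1,\ldots,p$ and absorbing the (constant number of) terms, the additive $\hat{r}^k = \mathcal{O}_p(c_n^{-1}) = \mathcal{O}_p(c_n^{-1}\max_\ell\{g_{n\ell}\})$ being dominated (note $g_{n\ell}$ is eventually bounded below, or one simply keeps it as a separate $\mathcal{O}_p$ term), we obtain
\[
|\boldsymbol{\epsilon}|_{(n,n)} = \mathcal{O}_p\!\left( \frac{1}{c_n}\max_\ell\{g_{n\ell}\} \right).
\]

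For the denominator, I would invoke Lemma~\ref{lem:non_vanishing_ios}: since \eqref{eq:general-no-zero-mass} is in force, $|\textbf{z}^k|$ satisfies the hypothesis \eqref{eq:uniform-zero-probability}, and $k_n$ is an intermediate sequence, so for any $\epsilon > 0$ there is $\delta > 0$ with $\mathbb{P}(|\textbf{z}^k|_{(n-k_n,n)} < \delta) < \epsilon$ for large $n$. Because order statistics are decreasing in index in the sense that $|\textbf{z}^k|_{(n-m,n)} \geq |\textbf{z}^k|_{(n-k_n,n)}$ for all $0 \leq m \leq k_n$, this single event controls $\inf_{0\leq m \leq k_n} |\textbf{z}^k|_{(n-m,n)} \geq \delta$ with probability at least $1-\epsilon$, i.e. $\left(\inf_{0\leq m\leq k_n}|\textbf{z}^k|_{(n-m,n)}\right)^{-1} = \mathcal{O}_p(1)$. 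Combining the numerator bound (uniform in $m$) with the denominator lower bound (also uniform in $m$) via
\[
\max_{0\leq m\leq k_n}\left| \frac{|\hat{\textbf{z}}^k|_{(n-m,n)}}{|\textbf{z}^k|_{(n-m,n)}} - 1 \right| \leq \frac{|\boldsymbol{\epsilon}|_{(n,n)}}{\inf_{0\leq m\leq k_n}|\textbf{z}^k|_{(n-m,n)}} = \mathcal{O}_p\!\left(\frac{1}{c_n}\max_\ell\{g_{n\ell}\}\right) \cdot \mathcal{O}_p(1)
\]
finishes the proof. The main obstacle is the denominator: an individual order statistic $|\textbf{z}^k|_{(n-m,n)}$ could in principle be arbitrarily small for some realizations, so one genuinely needs the Paley–Zygmund-type argument of Lemma~\ref{lem:non_vanishing_ios} together with the monotonicity of order statistics to get a bound that is uniform over $m$ and tight in probability; the numerator, by contrast, is essentially immediate from Lemma~\ref{lem:weyl} and Assumption~\ref{assu:component_rates}.
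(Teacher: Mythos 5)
Your proposal is correct and follows essentially the same route as the paper's proof: the same decomposition into a numerator difference controlled uniformly in $m$ via Lemma~\ref{lem:weyl} and Assumption~\ref{assu:component_rates} (yielding the $\mathcal{O}_p(c_n^{-1}\max_\ell\{g_{n\ell}\})$ bound), and the same denominator control through $|\textbf{z}^k|_{(n-k_n,n)}^{-1} = \mathcal{O}_p(1)$ from Lemma~\ref{lem:non_vanishing_ios}. Your handling of the additive $\hat{r}$ term and of the monotonicity of order statistics matches the paper's argument, so there is nothing substantive to add.
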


\begin{proof}
	The left-hand side of the claim equals
	\begin{align}\label{eq:relative_difference}
		\max_{0 \leq m \leq k_n} \left| \frac{|\hat{\textbf{z}}^{k}|_{(n - m,n)} - |\textbf{z}^{k}|_{(n - m,n)}}{|\textbf{z}^{k}|_{(n - m,n)}} \right|,
	\end{align}
	where by \eqref{eq:general_model} and Lemma \ref{lem:weyl} the numerator can be bounded by
	\begin{align*}
		\left| |\hat{\textbf{z}}^{k}|_{(n - m,n)} - |\textbf{z}^{k}|_{(n - m,n)} \right| \leq \max_i \left\{ \left| \sum_{j=1}^p \hat{h}_{j} z_{ij} + \hat{r} \right| \right\} \leq \sum_{j=1}^p | \hat{h}_{j} | |\textbf{z}^{j}|_{(n,n)} + |\hat{r}|,
	\end{align*}
	where $ \hat{h}_{j} = \mathcal{O}_p(c_n^{-1}) $, $ j = 1, \ldots , p $, and $ \hat{r} = \mathcal{O}_p(c_n^{-1}) $.
	Now, by Assumption \ref{assu:component_rates}, we have
	\begin{align*}
		&\sum_{j=1}^p | \hat{h}_{j} | |\textbf{z}^{j}|_{(n,n)} + |\hat{r}| \\
		= & \sum_{j=1}^p \left( a_{nj} | \hat{h}_{j} | \frac{|\textbf{z}^{j}|_{(n,n)} - b_{nj}}{a_{nj}} +  | \hat{h}_{j} | b_{nj} \right) + |\hat{r}| \\
		= & \sum_{j=1}^p \left( \frac{a_{nj}}{c_n} \mathcal{O}_p\left( 1 \right) + \frac{b_{nj}}{c_n} \mathcal{O}_p\left( 1 \right) \right) + \mathcal{O}_p\left(\frac{1}{c_n}\right) \\
		= & \sum_{j=1}^p \mathcal{O}_p \left( \frac{g_{nj}}{c_{n}} \right) + \mathcal{O}_p\left(\frac{1}{c_n}\right) \\
		= & \mathcal{O}_p \left( \frac{ 1 }{c_{n}} \max_\ell \{ g_{n\ell} \} \right),
	\end{align*}
	where we have used the result that if one deterministic sequence eventually majorizes another, $ r_n \leq s_n $, for all $ n \geq N $, then any sequence of random variables $ x_n $ with $ x_n = \mathcal{O}_p(r_n) $ has also $ x_n = \mathcal{O}_p(s_n) $.
	
	The previous bound holds uniformly in $ m $. Thus
	\begin{align*}
		\max_{0 \leq m \leq k_n} \left| \frac{|\hat{\textbf{z}}^{k}|_{(n - m,n)} - |\textbf{z}^{k}|_{(n - m,n)}}{|\textbf{z}^{k}|_{(n - m,n)}} \right| = \mathcal{O}_p \left( \frac{ 1 }{c_{n}} \max_\ell \{ g_{n\ell} \} \right) \max_{0 \leq m \leq k_n} \frac{1}{|\textbf{z}^{1}|_{(n - m,n)}},
	\end{align*}
	where $ \max_{0 \leq m \leq k_n} |\textbf{z}^{1}|^{-1}_{(n - m,n)} = |\textbf{z}^{1}|_{(n - k_n,n)}^{-1}$ is, by Lemma \ref{lem:non_vanishing_ios}, of order $ \mathcal{O}_p(1) $. This concludes the proof.
	
\end{proof}

Finally, we end this section with the following result allowing us to handle logarithm in the estimators.

\begin{lemma}\label{lem:max_taylor}
	Let $ x_n $ be an arbitrary triangular array of random variables satisfying $ \max_{0 \leq m \leq d_n} | x_m | = \mathcal{O}_p(e_n)$ for some $ d_n $ and $ e_n = o(1)$. Furthermore, let $g: (a,b) \mapsto \mathbb{R}$ with $-\infty\leq a < 0 < b\leq \infty$ be such that $g$ is continuously differentiable at the neighbourhood of $0$. Then
	\[
	\max_{0 \leq m \leq d_n} | g( x_m ) -g(0)| = \mathcal{O}_p(e_n).
	\]
\end{lemma}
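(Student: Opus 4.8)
The plan is to reduce everything to a first-order Taylor expansion (equivalently, the mean value theorem) for $g$ around $0$, combined with the elementary fact that $\mathcal{O}_p(e_n)$ with $e_n = o(1)$ forces the relevant quantity to be, with high probability, trapped inside any fixed neighbourhood of $0$. Concretely, since $g$ is continuously differentiable on a neighbourhood of $0$, I would first fix $\delta > 0$ and $M < \infty$ such that $(-\delta,\delta) \subset (a,b)$ and $\sup_{|t| \leq \delta} |g'(t)| \leq M$; such $\delta, M$ exist by continuity of $g'$ at $0$.

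Next I would exploit the hypothesis $\max_{0 \leq m \leq d_n} |x_m| = \mathcal{O}_p(e_n)$: given $\epsilon > 0$, there is a constant $K = K(\epsilon)$ and an index $N_1$ such that $\mathbb{P}\bigl(\max_{0 \leq m \leq d_n} |x_m| > K e_n\bigr) < \epsilon$ for all $n \geq N_1$. Because $e_n = o(1)$, there is also $N_2$ with $K e_n \leq \delta$ for all $n \geq N_2$. Hence, for $n \geq \max\{N_1, N_2\}$, outside an event of probability less than $\epsilon$ we have $|x_m| \leq K e_n \leq \delta$ simultaneously for all $0 \leq m \leq d_n$.

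On that favourable event, the mean value theorem applied to each $x_m$ gives $|g(x_m) - g(0)| \leq M |x_m| \leq M K e_n$ for every $m$, so that $\max_{0 \leq m \leq d_n} |g(x_m) - g(0)| \leq M K e_n$. Therefore $\mathbb{P}\bigl(e_n^{-1}\max_{0 \leq m \leq d_n} |g(x_m) - g(0)| > MK\bigr) \leq \mathbb{P}\bigl(\max_{0 \leq m \leq d_n}|x_m| > Ke_n\bigr) < \epsilon$ for all large $n$. Since $\epsilon > 0$ was arbitrary, this exhibits the uniform tightness of the family $e_n^{-1}\max_{0 \leq m \leq d_n} |g(x_m) - g(0)|$, which is exactly the claim $\max_{0 \leq m \leq d_n} |g(x_m) - g(0)| = \mathcal{O}_p(e_n)$.

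I do not anticipate a genuine obstacle here; the only point requiring a little care is the interplay between the two index thresholds — one ensuring tightness of the input array at scale $e_n$, the other ensuring that the corresponding high-probability bound $K e_n$ has shrunk below $\delta$ so that the local Lipschitz bound on $g'$ actually applies. Once both are in force, the argument is a one-line mean value theorem estimate uniform in $m$. (If one prefers to avoid invoking $g'$ boundedness directly, the same conclusion follows by writing $g(x_m) - g(0) = x_m \int_0^1 g'(s x_m)\,ds$ on the event $\{|x_m| \leq \delta\}$ and bounding the integral by $M$.)
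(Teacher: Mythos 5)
Your proof is correct and follows essentially the same route as the paper's: both arguments restrict to the high-probability event $\{\max_{0\le m\le d_n}|x_m|\le K e_n\}$, use $e_n=o(1)$ to force this bound below a fixed $\delta$ on which $g'$ is bounded, and then apply the mean value theorem uniformly in $m$ before bounding the exceptional probability. No gaps to report.
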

\begin{proof}
	Let $\epsilon>0$ be fixed. Then there exists $C>0$ and $N$ such that
	$$
	\mathbb{P}\left(\frac{\max_{0 \leq m \leq d_n} |x_m|}{e_n} > C\right) < \frac{\epsilon}{2}
	$$
	for $n\geq N$. By assumptions, there exists $\delta>0$ such that $g$ is continuously differentiable on an open interval $(-\delta,\delta)$. Moreover, by continuity of $g'$ we also have
	$$
	\left(g'\right)^* = \sup_{-\frac{\delta}{2}\leq x\leq \frac{\delta}{2}}|g'(x)| < \infty.
	$$
	Moreover, since $e_n = o(1)$ there exists $N^*$ such that
	$
	e_nC \leq \frac{\delta}{2}
	$
	for $n\geq N^*$.
	Thus, on the set $A_n = \{\max_{0 \leq m \leq d_n} |x_m| \leq e_n C\}$ mean value theorem implies
	$$
	\max_{0 \leq m \leq d_n}|g(x_m)-g(0)| \leq \left(g'\right)^*\max_{0 \leq m \leq d_n}|x_m|.
	$$
	Let $n\geq \max(N,N^*)$ and put $\widetilde{C}=\left(g'\right)^* C$. We have
	\begin{equation*}
		\begin{split}
			&\mathbb{P}\left(\frac{\max_{0 \leq m \leq d_n} |g(x_m)-g(0)|}{e_n} > \widetilde{C}\right) \\
			&=\mathbb{P}\left(A_n,\frac{\max_{0 \leq m \leq d_n} |g(x_m)-g(0)|}{e_n} > \widetilde{C}\right) + \mathbb{P}\left(A_n^c,\frac{\max_{0 \leq m \leq d_n} |g(x_m)-g(0)|}{e_n} > \widetilde{C}\right)\\
			&\leq \mathbb{P}\left(A_n,\frac{\left(g'\right)^*\max_{0 \leq m \leq d_n} |x_m|}{e_n} > \widetilde{C}\right) + \mathbb{P}\left(A_n^c\right)\\
			&\leq \mathbb{P}\left(\frac{\max_{0 \leq m \leq d_n} |x_m|}{e_n} > C\right) + \mathbb{P}\left(\frac{\max_{0 \leq m \leq d_n} |x_m|}{e_n} > C\right)\\
			& < \epsilon
		\end{split}
	\end{equation*}
	concluding the proof.
\end{proof}

\subsection{Convergence of the Hill and Moment estimators}
\label{subsec:main-proofs}
We begin with the proof of Theorem \ref{prop:consistency_and_limiting}.
\begin{proof}[Proof of Theorem \ref{prop:consistency_and_limiting}]
	Let $ \textbf{y} = (y_1, \ldots , y_n) \geq 0 $ and $\hat{\textbf{y}} = (\hat{y}_1, \ldots , \hat{y}_n) \geq 0$ be an arbitrary pair of samples that satisfy
	\begin{align}\label{eq:general_sample}
		\max_{0 \leq m \leq k_n} \left| \frac{(\hat{\textbf{y}})_{(n - m,n)}}{(\textbf{y})_{(n - m,n)}}  - 1 \right| =  \mathcal{O}_p \left( h_n \right),
	\end{align}
	where $ h_n = o(1) $.
	
	Recall that the Hill estimator is given by
	\[
	\hat{\gamma}_H(\textbf{y}) = M_n^{(1)}(\textbf{y}) =  \frac{1}{k_n} \sum_{m=0}^{k_n - 1} \log  \frac{(\textbf{y})_{(n-m, n)}}{(\textbf{y})_{(n-k_n, n)}},
	\]
	where $k_n/n \to 0$, $k_n \to \infty$.
	In the proof, we use the short notation
	\[
	\hat{w}_m := \frac{(\hat{\textbf{y}})_{(n-m,n)}}{(\textbf{y})_{(n-m,n)}} - 1. 
	\]
	We now have
	\begin{align*}
		\left| M_n^{(1)}(\hat{\textbf{y}}) - M_n^{(1)}(\textbf{y}) \right| &= \left| \frac{1}{k_n} \sum_{m=0}^{k_n - 1} \left[ \log \frac{(\hat{\textbf{y}})_{(n-m,n)}}{(\hat{\textbf{y}})_{(n-k_n,n)}} - \log \frac{(\textbf{y})_{(n-m,n)}}{(\textbf{y})_{(n-k_n,n)}} \right] \right|\\
		&= \left| \frac{1}{k_n} \sum_{m=0}^{k_n - 1} \left[ \log (1 + \hat{w}_m) - \log (1 + \hat{w}_{k_n}) \right] \right| \\
		&\leq  \frac{1}{k_n} \sum_{m=0}^{k_n - 1} \left| \log (1 + \hat{w}_m) \right| + \left| \log (1 + \hat{w}_{k_n}) \right| \\
		&\leq 2	\max_{0 \leq m \leq k_n} \left|  \log (1 + \hat{w}_m) \right|.
	\end{align*}
	The assumptions of Lemma \ref{lem:max_taylor} are now satisfied for $ x_n = \hat{w}_n $, $ d_n = k_n $, $ e_n = h_n $ and $ g(x) = \log(1 + x) $, implying that $ | M_n^{(1)}(\hat{\textbf{y}}) - M_n^{(1)}(\textbf{y}) | = \mathcal{O}_p(h_n) $. Plugging in $\textbf{y} = \textbf{z}^k$ and $\hat{\textbf{y}} = \hat{\textbf{z}}^k$, and using Lemma \ref{lem:maximum}, now give the convergence of the Hill estimator. For the moment estimator, recall that
	$$
	\hat{\gamma}_M(\textbf{y}) = M_n^{(1)}(\textbf{y}) + 1 - \frac{1}{2}\left( 1-\frac{[M_n^{(1)}(\textbf{y})]^2}{M_n^{(2)}(\textbf{y})} \right)^{-1}.
	$$
	By the first part of the proof, we have
	\begin{equation}
		\label{eq:hill-rate}
		|M_n^{(1)}(\hat{\textbf{y}})-M_n^{(1)}(\textbf{y})| = \mathcal{O}_p(h_n).
	\end{equation}
	It thus suffices to prove that
	\begin{equation}
		\label{eq:moment-rate}
		\left|\frac{[M_n^{(1)}(\textbf{y})]^2}{M_n^{(2)}(\textbf{y})} -\frac{[M_n^{(1)}(\hat{\textbf{y}})]^2}{M_n^{(2)}(\hat{\textbf{y}})}\right| = \mathcal{O}_p \left(\frac{h_n}{\hat{\gamma}_H(\textbf{y})}\right).
	\end{equation}
	Indeed, since $M_n^{(1)}(\textbf{y}) = \hat{\gamma}_H(\textbf{y})$ as a convergent sequence is uniformly tight, i.e., $\mathcal{O}_p(1)$, it follows from the convergence of $\hat{\gamma}_M(\textbf{y})$ that
	$$
	\left( 1-\frac{[M_n^{(1)}(\textbf{y})]^2}{M_n^{(2)}(\textbf{y})} \right)^{-1} = \mathcal{O}_p(1).
	$$
	Then \eqref{eq:moment-rate} together with the assumption $\frac{h_n}{\hat{\gamma}_H(\textbf{y})} \rightarrow_p 0$ implies that also
	$$
	\left( 1-\frac{[M_n^{(1)}(\hat{\textbf{y}})]^2}{M_n^{(2)}(\hat{\textbf{y}})} \right)^{-1} = \mathcal{O}_p(1).
	$$
	The claim then follows by using
	$$
	(1-a)^{-1} - (1-b)^{-1} = \frac{a-b}{1-a}(1-b)^{-1}, \quad a,b\in(0,1)
	$$
	with $a = \frac{[M_n^{(1)}(\textbf{y})]^2}{M_n^{(2)}(\textbf{y})}$ and $b= \frac{[M_n^{(1)}(\hat{\textbf{y}})]^2}{M_n^{(2)}(\hat{\textbf{y}})}$, leading to
	\begin{equation}
		\label{eq:moment-est-rate}
		\left|\hat{\gamma}_M(\hat{\textbf{y}}) -\hat{\gamma}_M(\textbf{y})\right| = \mathcal{O}_p \left(\frac{h_n}{\hat{\gamma}_H(\textbf{y})}\right).
	\end{equation}
	In order to prove \eqref{eq:moment-rate} we write
	\begin{align*}
		\left|\frac{[M_n^{(1)}(\textbf{y})]^2}{M_n^{(2)}(\textbf{y})} -\frac{[M_n^{(1)}(\hat{\textbf{y}})]^2}{M_n^{(2)}(\hat{\textbf{y}})}\right| &\leq \frac{1}{M_n^{(2)}(\textbf{y})}\left|[M_n^{(1)}(\textbf{y})]^2-M_n^{(1)}(\hat{\textbf{y}})]^2\right| \\
		&+ \frac{[M_n^{(1)}(\hat{\textbf{y}})]^2}{M_n^{(2)}(\hat{\textbf{y}})M_n^{(2)}(\textbf{y})}\left|M_n^{(2)}(\textbf{y}) -M_n^{(2)}(\hat{\textbf{y}})\right| \\
		&=:I_1(n) + I_2(n).
	\end{align*}
	For the first term $I_1(n)$, we use $a^2-b^2 = (a-b)(a+b)$ and \eqref{eq:hill-rate} to get
	\begin{align*}
		\left|[M_n^{(1)}(\textbf{y})]^2-M_n^{(1)}(\hat{\textbf{y}})]^2\right| & = \left|M_n^{(1)}(\textbf{y})-M_n^{(1)}(\hat{\textbf{y}})\right|\left|M_n^{(1)}(\textbf{y})+M_n^{(1)}(\hat{\textbf{y}})\right|\\
		&\leq \left|M_n^{(1)}(\textbf{y})-M_n^{(1)}(\hat{\textbf{y}})\right|^2 + 2\left|M_n^{(1)}(\textbf{y})-M_n^{(1)}(\hat{\textbf{y}})\right| M_n^{(1)}(\textbf{y})\\
		&=\mathcal{O}_p\left(h_nM_n^{(1)}(\textbf{y})\right).
	\end{align*}
	Here we used also the fact that $\frac{h_n}{M_n^{(1)}(\textbf{y})} \rightarrow_p 0$. Moreover, by Cauchy-Schwarz inequality we have $[M_n^{(1)}(\textbf{y})]^2 \leq M_n^{(2)}(\textbf{y})$. Thus we can estimate
	\begin{align*}
		I_1(n) &= \frac{1}{M_n^{(2)}(\textbf{y})}\left|[M_n^{(1)}(\textbf{y})]^2-M_n^{(1)}(\hat{\textbf{y}})]^2\right| \\
		& \leq \frac{1}{[M_n^{(1)}(\textbf{y})]^2} \mathcal{O}_p\left(h_nM_n^{(1)}(\textbf{y})\right) \\
		&= \mathcal{O}_p\left(\frac{h_n}{M_n^{(1)}(\textbf{y})}\right)
	\end{align*}
	which, by recalling that $\hat{\gamma}_H(\textbf{y})=M_n^{(1)}(\textbf{y})$, gives the claim for the term $I_1(n)$. For the term $I_2(n)$, we apply
	$a^2-b^2 = (a-b)(a+b)$ again yielding
	\begin{align*}
		&\left| M_n^{(2)}(\hat{\textbf{y}}) - M_n^{(2)}(\textbf{y}) \right| \\
		&= \left| \frac{1}{k_n} \sum_{m=0}^{k_n - 1} \left[\left[ \log \frac{(\hat{\textbf{y}})_{(n-m,n)}}{(\hat{\textbf{y}})_{(n-k_n,n)}}\right]^2 - \left[\log \frac{(\textbf{y})_{(n-m,n)}}{(\textbf{y})_{(n-k_n,n)}} \right]^2\right] \right|\\
		&\leq \frac{1}{k_n} \sum_{m=0}^{k_n - 1} \left| \log(1 + \hat{w}_m) - \log (1 + \hat{w}_{k_n}) \right|\left| \log \frac{(\hat{\textbf{y}})_{(n-m,n)}}{(\hat{\textbf{y}})_{(n-k_n,n)}} +\log \frac{(\textbf{y})_{(n-m,n)}}{(\textbf{y})_{(n-k_n,n)}} \right| \\
		&\leq \frac{2	\max_{0 \leq m \leq k_n} \left|  \log (1 + \hat{w}_m) \right|}{k_n} \sum_{m=0}^{k_n - 1} \left| \log \frac{(\hat{\textbf{y}})_{(n-m,n)}}{(\hat{\textbf{y}})_{(n-k_n,n)}} +\log \frac{(\textbf{y})_{(n-m,n)}}{(\textbf{y})_{(n-k_n,n)}} \right| .
	\end{align*}
	Here
	\begin{align*}
		&\frac{1}{k_n} \sum_{m=0}^{k_n - 1} \left| \log \frac{(\hat{\textbf{y}})_{(n-m,n)}}{(\hat{\textbf{y}})_{(n-k_n,n)}} +\log \frac{(\textbf{y})_{(n-m,n)}}{(\textbf{y})_{(n-k_n,n)}} \right| \\
		& \leq \frac{1}{k_n} \sum_{m=0}^{k_n - 1} \left| \log \frac{(\hat{\textbf{y}})_{(n-m,n)}}{(\hat{\textbf{y}})_{(n-k_n,n)}} -\log \frac{(\textbf{y})_{(n-m,n)}}{(\textbf{y})_{(n-k_n,n)}} \right| \\
		&	+\frac{2}{k_n} \sum_{m=0}^{k_n - 1} \log \frac{(\textbf{y})_{(n-m,n)}}{(\textbf{y})_{(n-k_n,n)}} \\
		&\leq 2	\max_{0 \leq m \leq k_n} \left|  \log (1 + \hat{w}_m) \right| + 2M_n^{(1)}(\textbf{y}).	\end{align*}
	Together with Lemma \ref{lem:max_taylor} this gives us
	$$
	\left| M_n^{(2)}(\hat{\textbf{y}}) - M_n^{(2)}(\textbf{y}) \right|	\leq \mathcal{O}_p\left(h_nM_n^{(1)}(\textbf{y})\right).
	$$
	Applying Cauchy-Schwarz again to get $[M_n^{(1)}(\hat{\textbf{y}})]^2 \leq M_n^{(2)}(\hat{\textbf{y}})$
	gives us
	\begin{align*}
		I_2(n) &= \frac{[M_n^{(1)}(\hat{\textbf{y}})]^2}{M_n^{(2)}(\hat{\textbf{y}})M_n^{(2)}(\textbf{y})}\left|M_n^{(2)}(\textbf{y}) -M_n^{(2)}(\hat{\textbf{y}})\right| \\
		& \leq \frac{1}{[M_n^{(1)}(\textbf{y})]^2}\left|M_n^{(2)}(\textbf{y}) -M_n^{(2)}(\hat{\textbf{y}})\right|  \\
		&= \mathcal{O}_p\left(\frac{h_n}{M_n^{(1)}(\textbf{y})}\right).
	\end{align*}
	Plugging in $\textbf{y} = \textbf{z}^k$ and $\hat{\textbf{y}} = \hat{\textbf{z}}^k$, and using Lemma \ref{lem:maximum}, now give the convergence of the moment estimator. This completes the proof.
\end{proof}
Applying the above computations, the proof of Theorem \ref{prop:consistency_and_limiting2} is now rather simple.
\begin{proof}[Proof of Theorem  \ref{prop:consistency_and_limiting2}]
	We write
	\[
	\sqrt{k_n} \left( \hat{\gamma}_H(|\hat{\textbf{z}}^1|) - C_H \right) = \sqrt{k_n} \left( \hat{\gamma}_H(|\hat{\textbf{z}}^1|) - \hat{\gamma}_H(|{\textbf{z}}^1|) \right) + \sqrt{k_n} \left( \hat{\gamma}_H(|{\textbf{z}}^1|) - C_H \right).
	\]
	The first claim now follows directly from \eqref{eq:hill-rate}. Similarly, the second claim follows directly from
	\[
	\sqrt{k_n} \left( \hat{\gamma}_M(|\hat{\textbf{z}}^1|) - C_M \right) = \sqrt{k_n} \left( \hat{\gamma}_M(|\hat{\textbf{z}}^1|) - \hat{\gamma}_M(|{\textbf{z}}^1|) \right) + \sqrt{k_n} \left( \hat{\gamma}_M(|{\textbf{z}}^1|) - C_M \right)
	\]
	together with \eqref{eq:moment-est-rate}.
\end{proof}

\section{Auxiliary simulation}\label{sec:appendix_simulation}

\renewcommand\thefigure{\thesection\arabic{figure}}

\textcolor{black}{The auxiliary simulation is otherwise similar to the simulation study in the main text, but is conducted with i.i.d. vectors instead of time-dependent series, putting us in the context of Section \ref{subsec:ICA} }

\textcolor{black}{Let $ \tilde{\textbf{z}}_1, \ldots , \tilde{\textbf{z}}_n $ be a collection of i.i.d. random vectors whose marginal distributions are independent and distributed as
	\begin{align*}
		\tilde{\textbf{z}}_j = \begin{pmatrix} \texttt{Pareto}(5) & \texttt{Pareto}(15) &  \texttt{Pareto}(30)   \end{pmatrix}^\top,
	\end{align*}
	where $\texttt{Pareto}(\alpha)$ denotes the Pareto distribution with shape parameter $\alpha$, scale parameter~1 and location parameter 0. Hereby, in this simulation setting, the first component has the heaviest tail and the corresponding theoretical extreme value index is 1/5.}

\textcolor{black}{The simulation was conducted with six distinct sample sizes $n$, which were $300,$ $10^3,$ $10^4,$ $10^5,$ $10^6$ and $10^7$. The threshold sequence $k_n$ was again chosen to be $k_n = \lfloor n^{1/4} \rfloor$ and, for each sample size, the simulation was iterated 2000 times.}

\textcolor{black}{As a preliminary step, the simulated observations $\tilde{\textbf{z}}_j $ were centered. Here, the centered observations are denoted as ${\textbf{z}}_i$. In every iteration $h \in \{1,\ldots 2000\}$, we applied the following linear transformation,
	\begin{align*}
		\textbf{x}_i = \boldsymbol{\Omega}_h {\textbf{z}}_i, \qquad \forall i\in \{1,\ldots,n\},
	\end{align*}
	where the elements of the $\mathbb{R}^{3\times 3}$-matrix $\boldsymbol{\Omega}$ were simulated independently, and separately in every iteration, from the univariate uniform distribution $\texttt{unif}(-100,100)$.}

We then applied the FastICA procedure, implemented in the R package fICA \citep{Rfica}, to the mixed observations $\textbf{x}_1, \ldots , \textbf{x}_n$. Note that the asymptotic convergence of FastICA requires that all of the components have finite fourth moments. In this simulation study, the existence of the required moments is satisfied, as even the most heavy tailed component $\texttt{Pareto}(5)$ has finite fourth moments.

For a small number of iterations, the FastICA algorithm failed to converge. In the case of a failed FastICA convergence, the observations and the mixing matrix were simulated again, until 2000 successful FastICA estimates were obtained. We denote the observations unmixed by FastICA as $ \hat{\textbf{z}}_1, \ldots , \hat{\textbf{z}}_n $. In the sequel, we use the notations $|\hat{\textbf{z}}|$ and $|{\textbf{z}}|$ for the sets $\{ | \hat{\textbf{z}}_1 |, \ldots , | \hat{\textbf{z}}_n | \}$ and $\{ | \textbf{z}_1 |, \ldots , | \textbf{z}_n | \}$, respectively. Here, the absolute value of a vector is taken elementwise.

In this simulation study, we have $\max_\ell ({g_{n\ell}}) = n^{-1/5}$, which corresponds to the \texttt{Pareto}(5) distribution. Furthermore, the FastICA unmixing estimator is $\sqrt{n}$-consistent, which gives $c_n = \sqrt{n}$. Hereby, under our choice of $k_n = \lfloor n^{1/4} \rfloor$, the assumptions required by \textcolor{black}{Theorems \ref{prop:consistency_and_limiting} and  \ref{prop:consistency_and_limiting2}} hold. Thus, under large sample sizes, the independent component estimation should have a negligible effect on the \textcolor{black}{extreme value} index estimation. \textcolor{black}{This implies that}, for large sample sizes, the \textcolor{black}{extreme value} index estimates calculated from $|\hat{\textbf{z}}|$ and $|{\textbf{z}}|$ are expected to be close to each other.

We estimated the \textcolor{black}{extreme value} indices for every component from \textcolor{black}{both} $|\hat{\textbf{z}}|$ and $|{\textbf{z}}|$, using both the Hill estimator and the moment estimator. Note that, both the Hill and the moment estimator produce three \textcolor{black}{extreme value} index estimates, one for each component. \textcolor{black}{Thus, in every simulation iteration, we again collected} the largest of the three \textcolor{black}{extreme value} index estimates, denoted in the following by $ \hat{\gamma}(|\hat{\textbf{z}}|) $ and $ \hat{\gamma}(|{\textbf{z}}|) $. Their histograms for sample sizes $n=300,10^3,10^4$ are displayed in Figure \ref{fig:simu_1}.

\begin{figure}
	\includegraphics[width=1\textwidth]{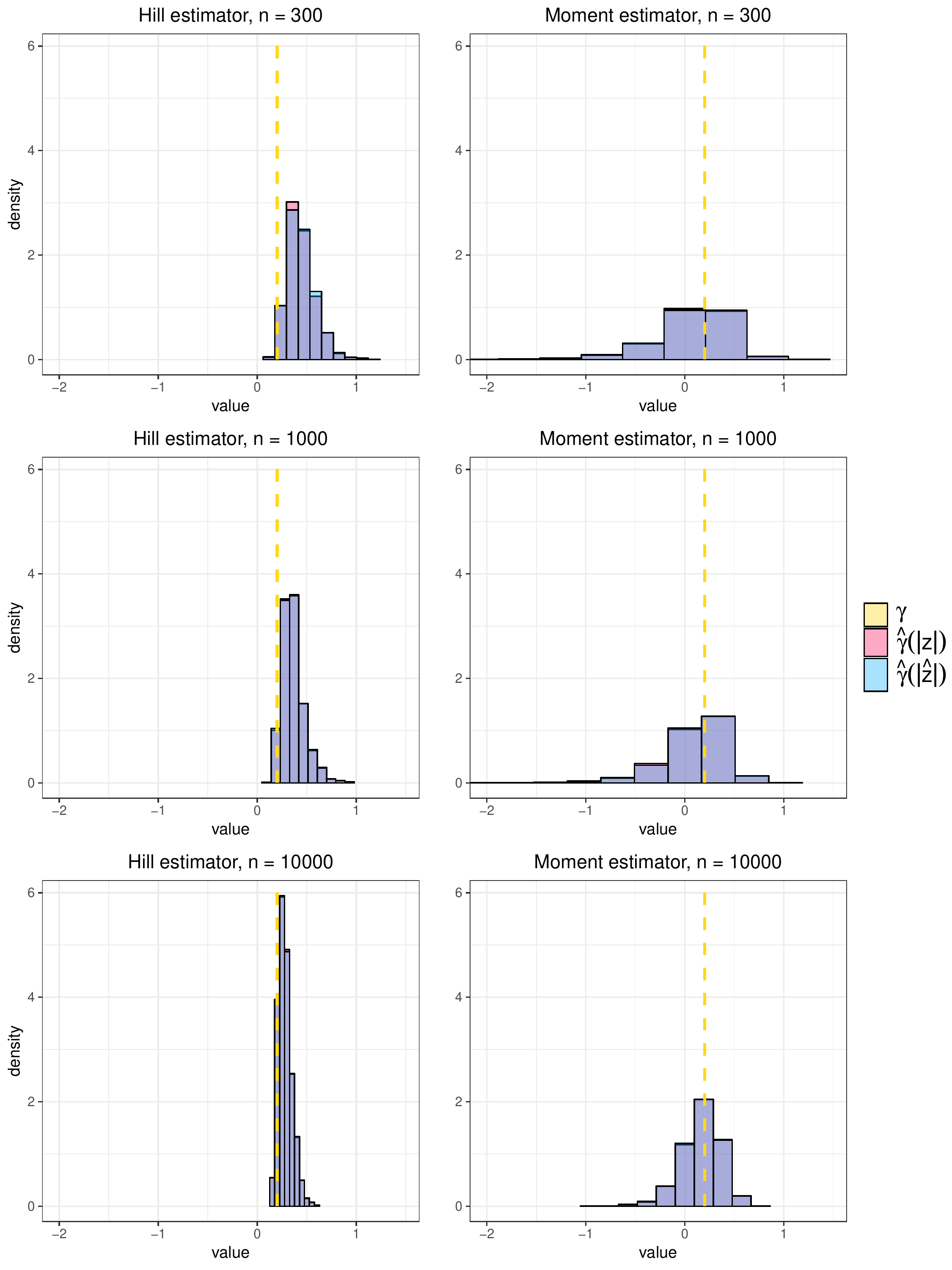}
	\caption{Histograms of $ \hat{\gamma}(|{\textbf{z}}|) $ (light red) and $ \hat{\gamma}(|\hat{\textbf{z}}|) $ (light blue) in \textcolor{black}{the auxiliary simulation study} with sample sizes 300, 1000 and 10 000. The dashed yellow vertical line is the theoretical extreme value index $\gamma = 1/5$. The dark blue color in the histograms represents the area, where the two histograms overlap.  }
	\label{fig:simu_1}
\end{figure}

In Figure \ref{fig:simu_1}, the \textcolor{black}{extreme value} indices estimated from $|\hat{\textbf{z}}|$ are illustrated using light blue colour, and the \textcolor{black}{extreme value} index estimates calculated from the original $|{\textbf{z}}|$ are illustrated using light red colour. Furthermore, the dark blue colour illustrates the proportion of estimates that overlap and the dashed yellow vertical line represents the theoretical \textcolor{black}{extreme value} index value $\gamma = 1/5$. Values smaller than $-2$ are omitted from the figure, as only a total of 5 moment estimates were smaller than $-2$.

In Figure \ref{fig:simu_1}, already with sample size $n=300$, the two histograms overlap almost completely. Moreover, when sample size is $n=1000$ or larger, one cannot visually distinguish the two histograms from each other. This illustrates that for sample sizes  $n=1000$ or larger, the effect of the ICA step is close to negligible. Hereby, we have omitted the histograms corresponding to sample sizes $10^5,10^6$ and $10^7$, as they carry no new information.

When comparing the \textcolor{black}{performances} of the Hill estimator and the moment estimator, Figure \ref{fig:simu_1} indicates that the variance of the moment estimator is larger of the two. However, the moment estimates seem to be more evenly centered around the true $\gamma$. On the other hand, the Hill estimator \textcolor{black}{seems} to be slightly biased, as is expected \cite{de2007extreme}. The bias seems to decrease as the sample size increases. \textcolor{black}{The results are thus overall largely similar to the time series example in the main text.}

\begin{figure}
	\includegraphics[width=1\textwidth]{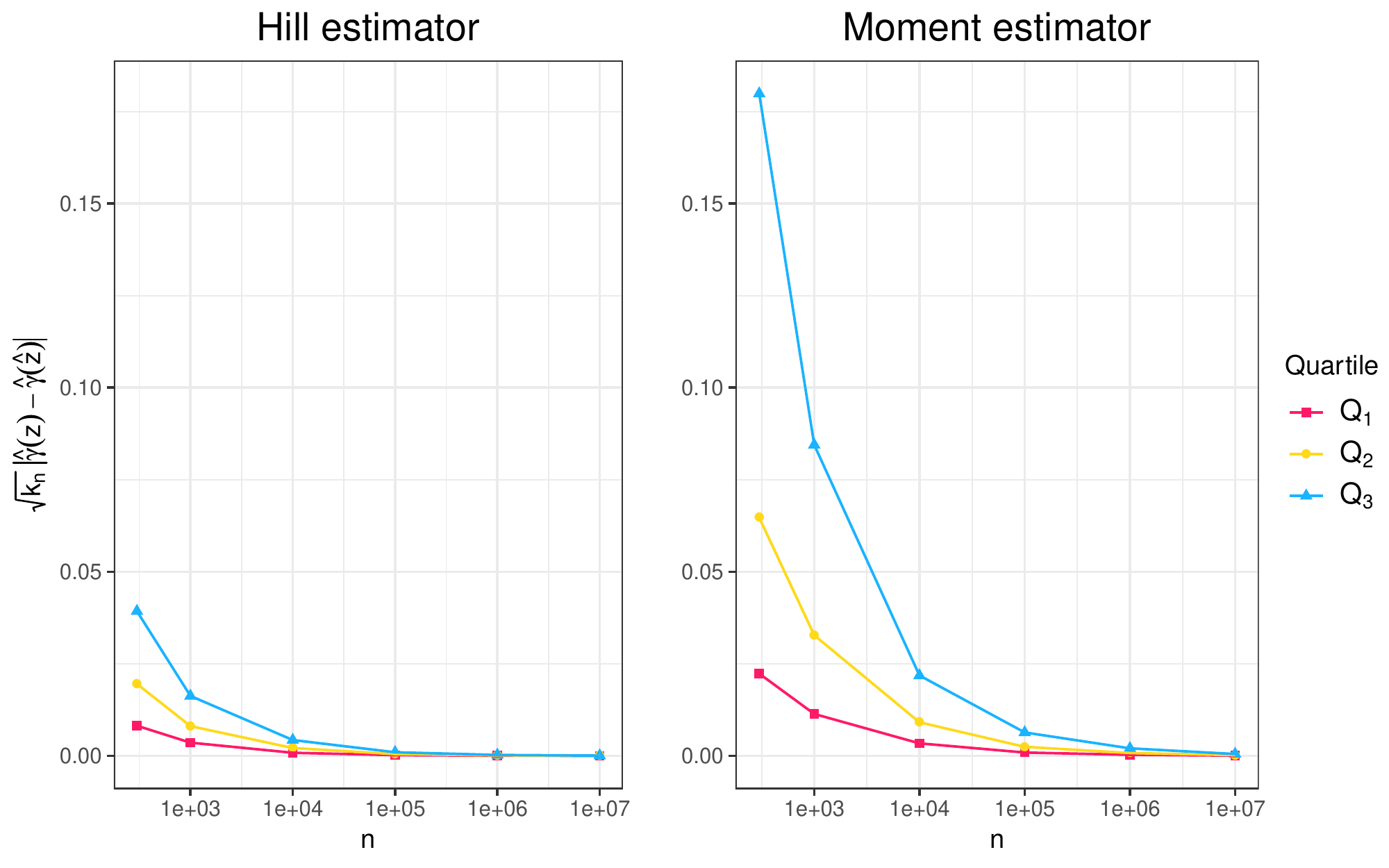}
	\caption{The quartiles of $\sqrt{k_n}|\hat{\gamma}(|\textbf{z}|) -\hat{\gamma}(|\hat{\textbf{z}}|) |$ for the Hill estimator and the  moment estimator in \textcolor{black}{the auxiliary simulation study.}}
	\label{fig:simu_2}
\end{figure}

Figure \ref{fig:simu_2} illustrates the absolute differences, scaled with $\sqrt{k_n}$, between the \textcolor{black}{extreme value} index estimates calculated from $|\textbf{z}|$ and $|\hat{\textbf{z}}|$. In Figure \ref{fig:simu_2}, the red and blue curves represent the first and third empirical quartiles, respectively. Additionally, the yellow curve is the corresponding sample median curve. The differences seem to converge to zero for both the Hill and the moment estimator. However, the moment estimator seems to require larger sample sizes \textcolor{black}{for the convergence}. The quartile $Q_3$ that corresponds to the Hill estimator is close to zero with sample sizes larger or equal to $10^5$. Conversely, in this simulation study, the moment estimator quartile $Q_3$ requires samples of size $10^7$ in order for it to be equally close to zero.

\section{Additional figures for the real data example}\label{sec:appendixC}

\textcolor{black}{Figure \ref{fig:example_1} shows the original four-variate time series $ \textbf{x}_i $ analysed in the real data example in Section \ref{sec:real_data} of the main text. Figure \ref{fig:example_3} shows the four latent series estimated from the four series in Figure \ref{fig:example_1} with generalized SOBI \citep{miettinen2019extracting}.}

\begin{figure}[htp]
	\includegraphics[width=1\textwidth]{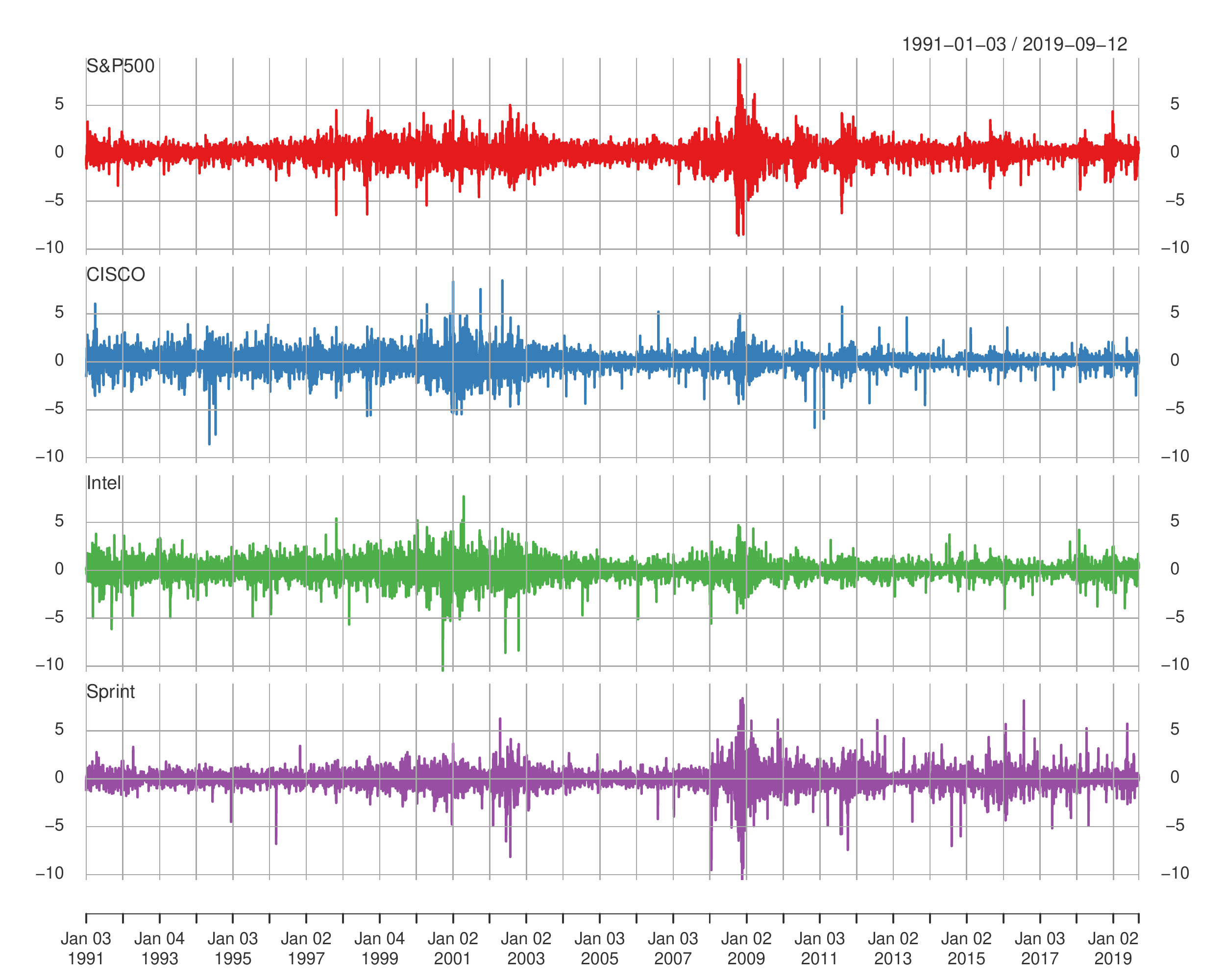}
	\caption{The log-returns of the S\&P500 index and the stock prices of CISCO Systems, Intel Corporation and Sprint Corporation in the period of January 3rd, 1991 -- September 12th, 2019.}
	\label{fig:example_1}
\end{figure}

\begin{figure}[htp]
	\includegraphics[width=1\textwidth]{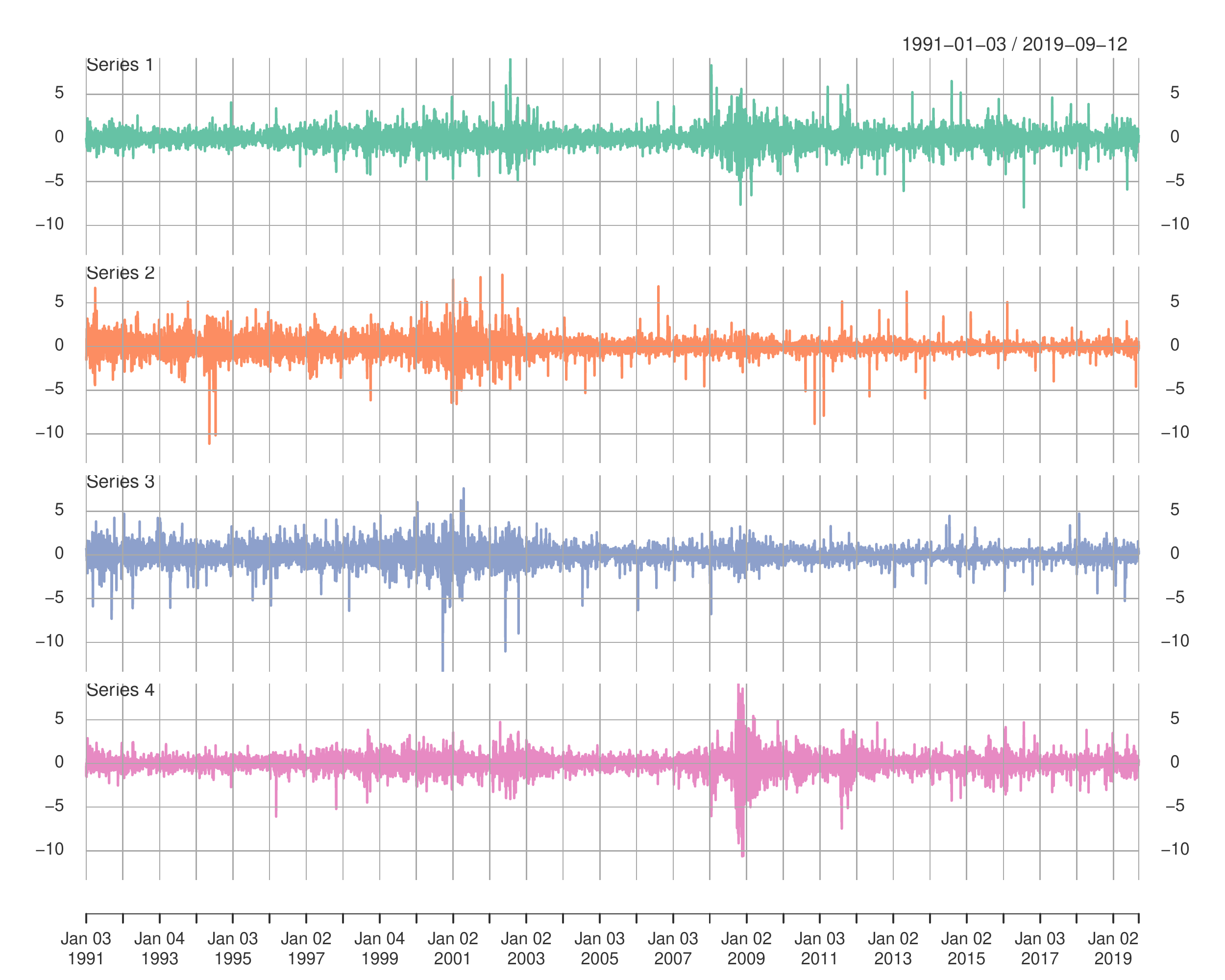}
	\caption{The latent source series estimated from the log-return observations with generalized SOBI.}
	\label{fig:example_3}
\end{figure}

\bibliographystyle{chicago}

\bibliography{references}
\end{document}